\documentclass[letterpaper,12pt]{amsart}

\usepackage[all]{xy}                        % 
\usepackage[margin=1in]{geometry}
\input xypic
\usepackage[bookmarks=true]{hyperref}       % Hyperref
%in DVI and PDF (like HTML Links between sections)

\usepackage{amssymb,latexsym,amsmath,amscd}
\usepackage{xspace}
\usepackage{color}
\allowdisplaybreaks
\usepackage{caption}
\usepackage{enumitem}

\usepackage{mathtools}
\usepackage{tikz}
\usetikzlibrary{cd}
\usepackage{graphicx} % Required for inserting images

\usepackage[utf8]{inputenc}
\usepackage{fourier}
\usepackage{array}
\usepackage{makecell}
\usepackage{blkarray}

\usepackage{array}
\newcolumntype{L}[1]{>{\raggedright\let\newline\\\arraybackslash\hspace{0pt}}m{#1}}
\newcolumntype{C}[1]{>{\centering\let\newline\\\arraybackslash\hspace{0pt}}m{#1}}
\newcolumntype{R}[1]{>{\raggedleft\let\newline\\\arraybackslash\hspace{0pt}}m{#1}}

\DeclareUnicodeCharacter{02B9}{\ensuremath{'}}

\vfuzz 2pt % Don't report over-full v-boxes if over-edge
%       is small
\hfuzz 2pt % Don't report over-full h-boxes if over-edge
%  is small

%--------Theorem Environments---------------------------

\theoremstyle{plain}
\newtheorem{theorem}{Theorem}[section]
\newtheorem*{theorem*}{Theorem}
\newtheorem{proposition}[theorem]{Proposition}
\newtheorem{corollary}[theorem]{Corollary}

\theoremstyle{definition}
\newtheorem{definition}[theorem]{Definition}

\newtheorem{remark}[theorem]{Remark}
\newtheorem{example}[theorem]{Example}

%-----GeneralMacros----
\newcommand{\enm}[1]{\ensuremath{#1}}          %
%   Shortcuts
\newcommand{\op}[1]{\operatorname{#1}}
\newcommand{\cal}[1]{\mathcal{#1}}

\renewcommand{\bar}[1]{\overline{#1}}

\newcommand{\bigslant}[2]{{\raisebox{.3em}{$#1$}\left/\raisebox{-.3em}{$#2$}\right.}}

\newcommand{\CC}{\enm{\mathbb{C}}}

\newcommand{\NN}{\enm{\mathbb{N}}}

\newcommand{\ZZ}{\enm{\mathbb{Z}}}

\newcommand{\PP}{\enm{\mathbb{P}}}

\newcommand{\VV}{\enm{\mathbb{V}}}

\newcommand{\Aa}{\enm{\cal{A}}}
\newcommand{\Bb}{\enm{\cal{B}}}

\newcommand{\Ee}{\enm{\cal{E}}}
\newcommand{\Ff}{\enm{\cal{F}}}
\newcommand{\Gg}{\enm{\cal{G}}}
\newcommand{\Hh}{\enm{\cal{H}}}
\newcommand{\Ii}{\enm{\cal{I}}}
\newcommand{\Jj}{\enm{\cal{J}}}
\newcommand{\Kk}{\enm{\cal{K}}}
\newcommand{\Ll}{\enm{\cal{L}}}
\newcommand{\Mm}{\enm{\cal{M}}}
\newcommand{\Nn}{\enm{\cal{N}}}
\newcommand{\Oo}{\enm{\cal{O}}}
\newcommand{\Pp}{\enm{\cal{P}}}
\newcommand{\Qq}{\enm{\cal{Q}}}

\newcommand{\Ss}{\enm{\cal{S}}}
\newcommand{\Tt}{\enm{\cal{T}}}

\newcommand*{\barrow}{\xrightarrow{\hspace*{1.1cm}}}
\renewcommand{\phi}{\varphi}
\renewcommand{\theta}{\vartheta}
\renewcommand{\epsilon}{\varepsilon}

         % Standard Operators

\newcommand{\Ext}{\op{Ext}}

\newcommand{\tensor}{\otimes}         % Symbols with
%meaning
\newcommand{\dirsum}{\oplus}

\newcommand{\Dirsum}{\bigoplus}

\renewcommand{\to}[1][]{\xrightarrow{\ #1\ }}

% Differentiation w.r.t. x [optionally something else]

% Underscore with optional phantom width

% Marginpar for sidenotes...

% the equal sign with a def on top: for definitions

% non math shortcuts
\newcommand{\ie}{\textit{i.e.},\ }           % i.e. in
%italics and with proper spacing afterwards
           % e.g.
% ....

% ------------------ Document Specific Macros
%---------------

\newcommand{\old}[1]{}

%%%%% RKL DEFINITIONS %%%%%%%

%\newcommand{\rR}{\boldsymbol{R}}

%% Fraktur letters

%% SF letters

%% Mathbold letters

%% Mathbold letters

%% Additial things for a convinient
%\newcommand{\bigslant}[2]{{\raisebox{.2em}{$#1$}\left/\raisebox{-.2em}{$#2$}\right.}}

\title{Net logarithmic tangent sheaves of complete intersections}
\author{Sukmoon Huh and Min-Gyo Jeong }
%\date{November 2023}
\address{Sungkyunkwan University, Suwon 440-746, Korea}
\email{sukmoonh@skku.edu}

\address{Sungkyunkwan University, Suwon 440-746, Korea}
\email{nida08@skku.edu}

\thanks{SH is supported by the National Research Foundation of Korea(NRF) grant funded by the Korea government(MSIT) (No. RS-2023-00208874). MJ is supported by Basic Science Research Program through the National Research Foundation of Korea(NRF) funded by the Ministry of Education (No. RS-2019-NR040081)}

\begin{document}

\begin{abstract}
The main purpose of this paper is to define the {\it net logarithmic tangent sheaf}, as a generalization of the logarithmic tangent sheaf introduced by P.~Deligne, over the field of complex numbers, and prove some basic properties and give some applications. The generalization is valid for the pairs of the smooth complete intersection variety and its complete intersecting subvariety. As applications, we investigate the locus of net logarithmic tangent sheaves on a smooth cubic surface in the corresponding moduli space of semistable sheaves. 
\end{abstract}

\maketitle

\section{Introduction}
 Let $X$ be a smooth projective variety and let $D$ be a reduced and effective divisor on $X$ with simple normal crossings. Ever since P.~Deligne defined the logarithmic sheaf $\Omega_X^1(\log D)$ as the sheaf of differential $1$-forms on $X$ with logarithmic poles along $D$ in \cite{De}, there has been a huge amount of works on this object with various aspects, including the Terao conjecture and the Torelli properties; see \cite{OT} and \cite{dolgachev1993arrangements,FMV}. A decade later, the notion was extended by K.~Saito in \cite{Saito} with $D$ allowing any reduced singularities and non-normal crossings.
 
 One of the seminal work about the Torelli property of logarithmic sheaves, is the one by I.~Dolgachev and M.~Kapranov in \cite{dolgachev1993arrangements}, where they prove that a generic point in a certain moduli space of stable sheaves on the projective space is given as the logarithmic vector bundle. More specifically, they showed that a generic arrangement of $m$ hyperplanes which do not osculate a normal rational curve of degree $n$ was determined by the logarithmic sheaf $\Omega^1_{\PP^n}(\log D)$ if $m\geq 2n+3$. In other words, the natural correspondence of $D$ with $\Omega^1_{\PP^n}(\log D)$ defines a map 
 \begin{equation*}
\begin{array}{cccc}
\Phi : &\mathbf{F} &\longrightarrow &\mathbf{M}_{\PP^n}(c_1,\ldots,c_n) \\[.3cm]
 &D &\mapsto &\Omega^1_{\PP^n}(\log D)
\end{array}  
\end{equation*}

\noindent which is now shown to be \emph{generically injective}, where $\mathbf{F}$ is the family of divisors $D=D_1+\cdots+D_m$ and $\mathbf{M}_{\PP^n}(c_1,\ldots,c_n)$ is the moduli space of stable vector bundles of rank $n$ on $\PP^n$ with its Chern classes $(c_1,\ldots,c_n)$ given by Chern polynomial $(1-t)^{n+1-m}$; this kind of problem is called the \emph{the Torelli problem}. This result was later extended to $m\geq n+2$ by J.~Vall\'es in \cite{Va2000}, and generalized to arrangements of higher degree hypersurfaces by E.~Angelini in \cite{An2014log,An2015log}. Not only arrangements of hypersurfaces with simple normal crossings in $\PP^n$, but there are also numerous studies exploring various generalized settings; see \cite{BHM,HJblownup,HMPV}. Quite recently, two main generalizations were proposed in \cite{CHKS,dolgachev2007logarithmic} by generalizing the corresponding Poincar\'e residue map, basically encoding more information on the singularity of $D$ into the generalized logarithmic sheaf. This line of research is important in the sense that it can suggest a method to describe moduli space of stable sheaves on $X$, specially non-generic boundary points that are not realized as the logarithmic sheaves in the original sense. As a further extension of this line of works, the work in progress in \cite{HMP} suggests the notion of the {\it net logarithmic sheaf} $\mathring{\Omega}_X^1(\log D)$ on $X$ associated to any divisor $D$, possibly non-reduced.

In this article, we introduce the dual notion of the net logarithmic sheaf $\mathring{\Omega}_X^1(\log D)$. For two complete intersection subvarieties $X=\VV(F_1,\ldots,F_r)$ and $Y=\VV(G_1,\ldots,G_s)$ in $\PP^N$ such that $X\cap Y$ is also a complete intersection, one can consider the gradient map
\[
\nabla~:~\Oo_{\PP^N}(1)^{\dirsum(N+1)} \longrightarrow \left(\Dirsum_{i=1}^{r}\Oo_{\PP^N}(f_i)\right)\dirsum\left(\Dirsum_{j=1}^{s}\Oo_{\PP^N}(g_j)\right).
\]
Then the {\it net logarithmic tangent sheaf} $\Tt_{X}(Y;\PP^N)$ is defined to be $\ker(\nabla)\otimes \Oo_X$, modulo its torsion. It is a generalization of the logarithmic tangent sheaf $\Tt_{X}(-\log D)\cong\left(\Omega^1_{X}(\log D)\right)^{\vee}$, when $X$ and $Y$ are hypersurfaces with possible singularities in $D=X \cap Y$. Note that the sheaf $\Tt_X(Y;\PP^N)$ is torsion-free on $X$ with rank $\dim(X)-\mathrm{codim}(Y)+1$, not necessarily reflexive. If we consider the case $X=\PP^N$, then $\Tt_{\PP^N} (Y; \PP^N)\tensor\Oo_{\PP^N}(-1)$ is the logarithmic tangent sheaf associated with a complete intersection $Y\subset\PP^N$ in the sense of \cite{FJV}. Then we establish its general properties, including the following; see Proposition \ref{mainprop}.

\begin{itemize}
    \item Let $X$ and $Y$ be smooth hypersurfaces of $\PP^N$, and let $D=X\cap Y$ be a reduced and effective divisor of $X$. Then the net logarithmic tangent sheaf $\Tt_X (Y; \PP^N)$ fits into the exact sequence
\[
 0\to\Tt_X(Y;\PP^N)\to\Tt_{X}(-\log D)\to\mathcal{T}\!\mathrm{or}^1_{\PP^N}(\mathrm{coker}(\nabla),\Oo_X)\to 0.
\]
\end{itemize} 
Note that $\Tt_{X}(Y;\PP^N)$ also fits into the dual Poincar\'e residue sequence
\[
0\to\Tt_{X}(Y;\PP^N)\to\Tt_{X}\to\Nn_{X,Y}\to 0,
\]
where a sheaf $\Nn_{X,Y}$ is isomorphic to a twisted Jacobian ideal $\Jj_{D}(D)$ modulo torsion; see Remark \ref{rem1}. Hence, one can interpret $\Tt_X(Y;\PP^N)$ as a natural refinement of the usual logarithmic tangent sheaf.

Next, we investigate the $\mu$-stability of $\Tt_{X}(H;\PP^N)$ when $X$ is a hypersurface of degree $d\geq 2$ and $H$ is given by a hyperplane of $\PP^n$ with $X\cap H$ nonsingular; see Proposition \ref{stab}. 
\begin{itemize}
    \item For a smooth and very general hypersurface $X$ with $\deg(X)\geq 2$ and a hyperplane $H$ in $\PP^N$ such that $X\cap H$ is nonsingular, the bundle $\Tt_{X}(H;\PP^N)$ is $\mu$-stable with respect to an ample line bundle $\Hh=\Oo_{X}(1)$.   
\end{itemize}
To give a more explicit description on the closure of the set of such logarithmic vector bundles $\Tt_X(H;\PP^N)$ in the corresponding moduli space, we focus on the following two cases:
\begin{itemize}
    \item[(i)] $X=Q\subset\PP^3$ a smooth quadric surface;
    \item[(ii)] $X=S\subset\PP^3$ a smooth cubic surface.
\end{itemize}
In either case, we have a morphism from the dual projective space $(\PP^3)^{*}$ to the moduli space $\mathbf{M}_{X}^{\Hh}(c_1,c_2)$ defined by sending $[H]$ to $\Tt_{X}(H;\PP^3)$, by showing the following in Proposition \ref{stabbb} \& Proposition \ref{stabstab}.
\begin{itemize}
    \item $\Tt_{X}(H;\PP^3)$ is stable for \emph{every} hyperplane $H\in(\PP^3)^{*}$, including the instance where the intersection $X\cap H$ is singular subvariety of $X$.
\end{itemize}
Indeed, we show that the morphism is injective for each case, \ie the Torelli property holds for $\Tt_X(H;\PP^3)$; see Remark \ref {tor123} \& Corollary \ref{cinj2}. Furthermore, in case $\mathrm{(i)}$, the moduli space $\mathbf{M}_{Q}^{\Hh}((1,1),2)$ turns out to be isomorphic to $(\PP^3)^{*}$; see Corollary \ref{corr}. In case $\mathrm{(ii)}$, we obtain the following by Proposition \ref{char}.
\begin{itemize}
    \item The closure of the set of vector bundles $\Ff$ in $\mathbf{M}_{S}^{\Hh}(0,9)$ such that $\Ff(1)$ is globally generated with $\mathrm{h}^0(\Ff(1))=3$, is isomorphic to $(\PP^3)^{*}$.
\end{itemize}

\section{Preliminaries}
Let $Y=Y_1+ \dots+ Y_s$ be a reduced and effective divisor in $\PP^N$ with each $Y_j=\VV(G_j)$ irreducible for $G_j \in \CC [x_0, \dots, x_N]$ and $g_j=\deg (G_j)$. Assume further that $\cap_{j=1}^s Y_j$ is a complete intersection of codimension $s$ in $\PP^N$. The kernel of the gradient map 
\[
\xi_{Y}=\left(
\begin{array}{cc}
     \nabla{G_1}\\
     \vdots\\
     \nabla{G_s}
\end{array}
\right)
:\quad  
\Oo_{\PP^N}(1)^{\dirsum(N+1)} \longrightarrow \Dirsum_{j=1}^{s}\Oo_{\PP^N}(g_j)
\]
can be considered as a generalization of the logarithmic tangent sheaf, associated to the complete intersection $\cap_{j=1}^s Y_j$. We denoted it by $\Tt_{Y, \PP^N}:=\mathrm{ker}(\xi_{Y})$; refer \cite{FJV}. Note that the sheaf $\Tt_{Y, \PP^N}$ is reflexive with $\mathrm{rank}(\Tt_{Y, \PP^N}) =N-r+1$. 

Let $X=X_1 \cap \dots \cap X_r \subset \PP^N$ be a smooth complete intersection of dimension $\dim (X)=N-r$, defined by $r$ hypersurfaces $X_i=\VV (F_i)$ for homogeneous polynomials $F_1,\ldots,F_r\subset \CC[x_0,\ldots,x_N]$ with $f_i=\deg (F_i)$. We further assume that $D= X \cap Y$ be a complete intersection of codimension $r+s$ in $\PP^N$. Then one can consider the logarithmic tangent sheaf $\Tt_{X \cap Y, \PP^N}$, associated to the complete intersection $X \cap Y\subset \PP^N$, fitting into an exact sequence
\begin{equation}\label{mm1}
0\to \Tt_{X \cap Y, \PP^N} \to \Oo_{\PP^N}(1)^{\oplus (N+1)} \stackrel{\xi_{X\cap Y}}{\xrightarrow{\hspace*{1.5cm}}}\left(\Dirsum_{i=1}^{r}\Oo_{\PP^N}(f_i)\right)\dirsum\left(\Dirsum_{j=1}^{s}\Oo_{\PP^N}(g_j)\right), 
\end{equation}
where the map $\xi_{X \cap Y}$ is given by the Jacobian matrix
\[
\xi_{X\cap Y}:=
\left(
\begin{array}{cc}
     \nabla{F_1}\\
     \vdots\\
     \nabla{F_r}\\
     \nabla{G_1}\\
     \vdots\\
     \nabla{G_s}
\end{array}
\right)
=\left(
\begin{array}{ccc}
     \frac{\partial{F_1}}{\partial{x_0}}&\cdots&\frac{\partial{F_1}}{\partial{x_N}}\\
     \vdots&\ddots &\vdots\\
     \frac{\partial{F_r}}{\partial{x_0}}&\cdots&\frac{\partial{F_r}}{\partial{x_N}}\\
     \frac{\partial{G_1}}{\partial{x_0}}&\cdots&\frac{\partial{G_1}}{\partial{x_N}}\\
     \vdots&\ddots &\vdots\\
     \frac{\partial{G_s}}{\partial{x_0}}&\cdots&\frac{\partial{G_s}}{\partial{x_N}}
\end{array}
\right).
\]

\noindent Then $\Tt_{X \cap Y, \PP^N}$ is a reflexive sheaf on $\PP^N$ with $\mathrm{rank}(\Tt_{X \cap Y, \PP^N})=N-r-s+1$. Set
\[
\Aa_{X \cap Y}:=\mathrm{Im}(\xi_{X \cap Y}) , \quad \Bb_{X \cap Y}:=\mathrm{coker}(\xi_{X \cap Y})
\]
to get the following two exact sequences:

\begin{equation}\label{fe1}
    0\to \Tt_{X \cap Y, \PP^N} \to \Oo_{\PP^N}(1)^{\dirsum(N+1)} \to \Aa_{X\cap Y} \to 0
\end{equation}

\noindent and
\begin{equation}\label{fe2}
    0\to \Aa_{X \cap Y} \to \left(\Dirsum_{i=1}^{r}\Oo_{\PP^N}(f_i)\right)\dirsum\left(\Dirsum_{i=1}^{s}\Oo_{\PP^N}(g_i)\right) \to \Bb_{X \cap Y} \to 0.
\end{equation}
Tensoring \eqref{fe1} by $\Oo_X$, we get
\[
0 \to \mathcal{T}\!\mathrm{or}^1_{{\PP^N}}\left(\Aa_{X\cap Y}, \Oo_X\right) \stackrel{\alpha}{\longrightarrow} \Tt_{X \cap Y, \PP^N}\tensor \Oo_X
\to \Oo_{X}(1)^{\dirsum(N+1)} \to \Aa_{X \cap Y}\tensor{\Oo_{X}}\to 0.
\]

\begin{definition}
We set $\Tt_X(Y; \PP^N):=\mathrm{coker}(\alpha)$, i.e. 
\[
\Tt_X (Y; \PP^N):=\bigslant{\Tt_{X \cap Y, \PP^N}\tensor \Oo_X}{\mathcal{T}\!\mathrm{or}^1_{{\PP^N}}\left(\Aa_{X \cap Y}, \Oo_X\right)}, 
\]
and call it the {\it net logarithmic tangent sheaf} on $X$, associated to $Y$, or equivalently to $X \cap Y$. Then it is a torsion-free sheaf on $X$ with rank $\dim (X)-s+1$, not necessarily reflexive. Note that $\Tt_{\PP^N}(Y; \PP^N)\tensor\Oo_{\PP^N}(-1)$ with $r=0$ is the logarithmic sheaf on $\PP^N$ associated to a complete intersection $Y$ in the sense of \cite{FJV}. 
\end{definition}

\begin{remark}\label{depend}
Note that the sheaf $\Tt_{X \cap Y, \PP^N}$ depends only on the complete intersection ideal $(F_1, \dots, F_r, G_1, \dots, G_s)\subset \CC [x_0, \dots, x_N]$, not on the choice of its minimal generators. In particular, the sheaf $\Tt_X(Y; \PP^N)$ depends on $X$ and $X \cap Y$, i.e. for another complete intersection $Y'$ such that $X \cap Y=X \cap Y'$ we have $\Tt_X(Y; \PP^N) \cong \Tt_X(Y'; \PP^N)$.
\end{remark}

\begin{remark}\label{general}
Let $\PP^n\subset \PP^N$ be a linear subspace and assume that it is defined by $x_{n+1}=\dots = x_{N}=0$ by a change of variables. For a smooth hypersurface $Y=\VV (G)$ of degree $d$ in $\PP^N$, one can have the following exact sequence
\[
0\to \Tt_{\PP^n \cap Y} \to \Oo_{\PP^N}(1)^{\oplus (N+1)} \to \Oo_{\PP^N}(1)^{\oplus (N-n)}\oplus \Oo_{\PP^N}(d),
\]
which induces an exact sequence
\begin{equation}\label{eqa33}
0\to \Tt_{\PP^n \cap Y} \to \Oo_{\PP^N}(1)^{\oplus (n+1)}\stackrel{\mathrm{M}}{\longrightarrow} \Ii_{Z', \PP^N}(d) \to 0, 
\end{equation}
where $\mathrm{M}=\begin{pmatrix} \frac{\partial G}{\partial x_0} & \dots & \frac{\partial G}{\partial x_n}\end{pmatrix}$ and $Z'$ is the degeneracy locus of $\mathrm{M}$. By tensoring the sequence (\ref{eqa33}) by $\Oo_{\PP^n}$, one gets
\begin{equation}\label{eqa34}
0\to \Tt_{\PP^n}(Y; \PP^N) \to \Oo_{\PP^n}(1)^{\oplus (n+1)}\to \Ii_{Z', \PP^N}(d)\otimes \Oo_{\PP^n} \to 0.
\end{equation}
Note that the sheaf $\Ii_{Z', \PP^N}(d)\otimes \Oo_{\PP^n}$ is an extension of $\Ii_{Z' \cap \PP^n, \PP^n}(d)$ by $\Oo_{Z' \cap \PP^n}$ and so we get 
\[
\mathcal{H}om_{\PP^n}(\Ii_{Z', \PP^N}(d)\otimes \Oo_{\PP^n}, \Oo_{\PP^n}) \cong \Oo_{\PP^n}(-d).
\]
Applying the functor $\mathcal{H}om_{\PP^n}( -, \Oo_{\PP^n})$ to the sequence (\ref{eqa34}), we get 
\[
0\to \Oo_{\PP^n}(-d) \to \Oo_{\PP^n}(-1)^{\oplus (n+1)}\to \Tt_{\PP^n}(Y;\PP^N)^\vee \stackrel{\sigma}{\longrightarrow} \mathcal{E}xt_{\PP^n}^1(\Ii_{Z', \PP^n}(d)\otimes \Oo_{\PP^n}, \Oo_{\PP^n}) \to 0.
\]
Note that the net logarithmic sheaf $\mathring{\Omega}_{\PP^n}^1(\log (\PP^n \cap Y))$ is introduced in \cite{HMP}; indeed, the sheaf is identified with the kernel of the map $\sigma$. Hence we can say that the net logarithmic tangent sheaf $\Tt_X(Y; \PP^N)$ is the proper counterpart of the net logarithmic sheaf $\mathring{\Omega}^1_X(\log (X \cap Y))$ in the dual sense. Note that the net logarithmic sheaf $\mathring{\Omega}_X^1 (\log D)$ is defined over any smooth variety $X$ associated to any effective divisor $D$, and so it would be interesting to extend the notion $\Tt_X(Y; \PP^N)$ to non-complete intersection subvariety $X$. 
\end{remark}

\begin{remark}\label{linear}
As in Remark \ref{general}, consider a linear subspace $\PP^n \subset \PP^N$. Assume that the complete intersections $X$ and $Y$ both are contained in $\PP^n$, i.e. $\langle x_{n+1}, \dots, x_N\rangle \subset \mathrm{I}(X) \cap \mathrm{I}(Y)$. Then we have $\Tt_X(Y; \PP^n) \cong \Tt_X(Y; \PP^N)$ by the argument in Remark \ref{depend}. In particular, it is enough to consider the sheaf $\Tt_X(Y; \PP^N)$ when $X\cup Y$ is nondegenerate in $\PP^N$.  
\end{remark}

%$T_{X}(-\log\;D)$ VS $\Tt_{\sigma}$ of \cite{faenzi2021logarithmic} 

\begin{proposition}\label{mainprop}
Let $X, Y\subset\PP^N$ be two smooth hypersurfaces, not necessarily intersecting transversally, such that $D=X \cap Y$ is a reduced and effective divisor of $X$. Then we have an exact sequence
\begin{equation}\label{mm2}
 0\to\Tt_X(Y;\PP^N)\to\Tt_{X}(-\log D)\to\mathcal{T}\!\mathrm{or}^1_{\PP^N}(\Qq,\Oo_X)\to 0,
\end{equation}
where $\Tt_{X}(-\log D)$ is the dual of the sheaf of logarithmic differential forms $\Omega^1_X(\log D)$ and $\Qq:=\Bb_{X \cap Y}=\mathrm{coker}(\xi_{X \cap Y})$. In particular, we have
\[
\Tt_X(Y; \PP^N)^{\vee\vee}\cong \Tt_X(-\log D).
\]
\end{proposition}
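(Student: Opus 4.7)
The strategy is to realize both $\Tt_X(Y;\PP^N)$ and $\Tt_X(-\log D)$ as subsheaves of $\Oo_X(1)^{\oplus(N+1)}$ (the latter through the Euler quotient) and then compare them by a snake lemma chase, using Saito's derivation-theoretic definition of the logarithmic tangent sheaf. Concretely, I introduce the auxiliary kernel
\[
\widetilde{\Tt}\;:=\;\ker\bigl(\Oo_X(1)^{\oplus(N+1)}\xrightarrow{\xi|_X}\Oo_X(f)\oplus\Oo_X(g)\bigr),
\]
tensor the sequence $0\to\Aa_{X\cap Y}\to\Oo_{\PP^N}(f)\oplus\Oo_{\PP^N}(g)\to\Qq\to 0$ with $\Oo_X$, and chase a snake lemma diagram comparing the defining presentation of $\Tt_X(Y;\PP^N)$ with $0\to\widetilde{\Tt}\to\Oo_X(1)^{\oplus(N+1)}\to\mathrm{Im}(\xi|_X)\to 0$. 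The connecting map identifies the defect as $\mathcal{T}\!\mathrm{or}^1_{\PP^N}(\Qq,\Oo_X)$, producing
\[
0\to\Tt_X(Y;\PP^N)\to\widetilde{\Tt}\to\mathcal{T}\!\mathrm{or}^1_{\PP^N}(\Qq,\Oo_X)\to 0.
\]

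The heart of the proof is identifying $\widetilde{\Tt}$ with $\Tt_X(-\log D)$. By Euler's identity, the Euler section $e=(x_0,\ldots,x_N)$ satisfies $\xi(e)=(fF,gG)$, so $\xi(e)|_X=(0,gG|_X)$, which is a nonzero section because $X\not\subset Y$; hence $\widetilde{\Tt}\cap(\Oo_X\cdot e)=0$, and the restricted Euler surjection $\Oo_X(1)^{\oplus(N+1)}\twoheadrightarrow\Tt_{\PP^N}|_X$ embeds $\widetilde{\Tt}$ as a subsheaf of $\Tt_{\PP^N}|_X$. Unpacking the resulting image, $[v]\in\Tt_{\PP^N}|_X$ lies in it iff some lift $(v_0,\ldots,v_N)$ satisfies $\sum v_iF_i|_X=0$ (so $[v]\in\Tt_X$) and $\sum v_iG_i|_X\in\Oo_X\cdot G|_X$; the second condition is well-posed on $\Tt_{\PP^N}|_X$ modulo $\Oo_X\cdot G|_X$ precisely because the Euler ambiguity contributes multiples of $gG|_X$. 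But this is exactly Saito's characterization $v(G|_X)\in\Ii_D$ of a section of $\Tt_X(-\log D)\subset\Tt_X$, so $\widetilde{\Tt}\cong\Tt_X(-\log D)$ and the exact sequence of the proposition follows.

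For the ``in particular'' part, $\Tt_X(-\log D)=(\Omega_X^1(\log D))^\vee$ is reflexive on the smooth variety $X$, so it suffices to show $\supp\bigl(\mathcal{T}\!\mathrm{or}^1_{\PP^N}(\Qq,\Oo_X)\bigr)\subset\mathrm{Sing}(D)$, which has codimension $\geq 2$ in $X$ since $D$ is a reduced Cartier divisor. For $p\in X\setminus D$, proportionality $\nabla G(p)=\lambda\nabla F(p)$ would combine with Euler to force $gG(p)=\sum x_iG_i(p)=\lambda\sum x_iF_i(p)=\lambda fF(p)=0$, contradicting $G(p)\ne 0$ (and the case $\lambda=0$ is ruled out similarly by Euler); for $p\in D$, proportionality is equivalent to $T_pX=T_pY$, i.e.\ $D$ being singular at $p$. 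Applying $\mathcal{H}om_{\Oo_X}(-,\Oo_X)$ to the short exact sequence and invoking the vanishing of both $\mathcal{H}om$ and $\mathcal{E}xt^1$ into $\Oo_X$ for a torsion sheaf of codimension $\geq 2$ gives $\Tt_X(Y;\PP^N)^\vee\cong\Tt_X(-\log D)^\vee$, and a second dualization, combined with the reflexivity of the right-hand side, yields $\Tt_X(Y;\PP^N)^{\vee\vee}\cong\Tt_X(-\log D)$. The main obstacle is the identification step: one must carefully track the Euler ambiguity in order to match the naive sheaf-theoretic kernel $\widetilde{\Tt}$ with Saito's derivation-theoretic logarithmic tangent sheaf.
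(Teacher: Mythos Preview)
Your proof is correct and takes a genuinely different, more direct route than the paper. The paper argues through the intermediate sheaf $\Tt_{Y,\PP^N}\otimes\Oo_X$: it first embeds $\Tt_X(Y;\PP^N)$ there with cokernel $\Kk\otimes\Oo_X$ (by comparing the Jacobian maps $\xi_{X\cap Y}$ and $\xi_Y$), then separately embeds $\Tt_X(-\log D)$ into the same sheaf with cokernel $\Ss$, using the Poincar\'e residue sequences for $Y\subset\PP^N$ and $D\subset X$ together with Dolgachev's sequence $0\to\Tt_X(-\log D)\to\Tt_X\to\Jj_D(D)\to 0$; a final $3\times 3$ diagram shows the two cokernels differ exactly by $\mathcal{T}\!\mathrm{or}^1_{\PP^N}(\Qq,\Oo_X)$. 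You instead introduce the single auxiliary kernel $\widetilde{\Tt}=\ker(\xi|_X)$ on $X$ and identify it with $\Tt_X(-\log D)$ directly, by passing through the Euler quotient and reading off Saito's derivation condition $v(G|_X)\in\Ii_D$. Your argument is leaner and avoids the cascade of commutative diagrams; the paper's route, on the other hand, keeps the Jacobian ideal $\Jj_D(D)$ visible throughout, which feeds into the Poincar\'e-residue description of $\Tt_X(Y;\PP^N)$ used in the subsequent remark. You also supply a full argument for the double-dual claim (showing that $\mathcal{T}\!\mathrm{or}^1_{\PP^N}(\Qq,\Oo_X)$ is supported on $\mathrm{Sing}(D)$, hence in codimension $\geq 2$ since $D$ is reduced), which the paper states without proof.
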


\begin{proof}
Let $X=\VV(F)$ and $Y=\VV(G)$ for $F,G\in\CC[x_0,\cdots,x_N]$ and $f=\deg(F),\, g=\deg(G)$. We have an exact sequence for $\Tt_{X\cap Y,\PP^N}$ :
\[
0 \to \Tt_{X\cap Y,\PP^N} \to \Oo_{\PP^N}(1)^{\oplus{(N+1)}} \stackrel{\xi_{X\cap Y}}{\barrow} \Oo_{\PP^N}(f) \oplus \Oo_{\PP^N}(g)
\]
and then we get the following two commutative diagrams
\begin{equation}\label{Tfg1}
\begin{tikzcd}[
  row sep=normal, column sep=normal,
  ar symbol/.style = {draw=none,"\textstyle#1" description,sloped},
  isomorphic/.style = {ar symbol={\cong}},
  ]
  & & & 0\ar[d] &\\
  & 0\ar[d] &  & \Kk \ar[d] & \\
  0 \ar[r] & \Tt_{X\cap Y,\PP^N} \ar[d]\ar[r] & \left(\Oo_{\PP^N}(1)\right)^{\oplus(N+1)} \ar[d,isomorphic]\ar[r,"\xi_{X\cap Y}"] & \Aa_{X\cap Y} \ar[d] \ar[r] & 0  \\
  0 \ar[r] & \Tt_{Y,\PP^N} \ar[d]\ar[r] & \left(\Oo_{\PP^N}(1)\right)^{\oplus(N+1)} \ar[r,"\xi_Y"] & \Oo_{\PP^N}(g) \ar[d]\ar[r] & 0\\
  & \Kk \ar[d] &  & 0 & \\
  & 0 &  &  &
\end{tikzcd}
\end{equation}
and
\begin{equation}\label{Tfg2}
\begin{tikzcd}[
  row sep=normal, column sep=normal,
  ar symbol/.style = {draw=none,"\textstyle#1" description,sloped},
  isomorphic/.style = {ar symbol={\cong}},
  ]
  & 0\ar[d] & 0 \ar[d] &  \\
  0 \ar[r] & \Kk \ar[d]\ar[r] & \Oo_{\PP^N}(f) \ar[d]\ar[r] & \Bb_{X\cap Y} \ar[d,isomorphic] \ar[r] & 0  \\
  0 \ar[r] & \Aa_{X\cap Y} \ar[d]\ar[r] & \Oo_{\PP^N}(f)\oplus\Oo_{\PP^N}(g) \ar[d]\ar[r] & \Bb_{X\cap Y} \ar[r] & 0\\
 & \Oo_{\PP^N}(g)\ar[d]\ar[r,isomorphic] & \Oo_{\PP^N}(g)\ar[d] & \\
 & 0 & 0. & 
\end{tikzcd}
\end{equation}
from the associated short exact sequences; see \eqref{fe1} and \eqref{fe2}. Now, the first vertical sequence of \eqref{Tfg1} gives us that
\begin{equation}\label{Tfg3}
    \begin{tikzcd}[cramped,
  row sep=small, column sep=small,
  ar symbol/.style = {draw=none,"\textstyle#1" description,sloped},
  isomorphic/.style = {ar symbol={\cong}},
  ]
 0 \ar[r] & \mathcal{T}\!\mathrm{or}^1_{\PP^N}(\Kk,\Oo_X)\ar[d,isomorphic]\ar[r] &\Tt_{X\cap Y,\PP^N}\otimes\Oo_{X} \ar[dr,twoheadrightarrow]\ar[rr]&& \Tt_{Y,\PP^N}\otimes\Oo_{X} \ar[r] &\Kk\otimes\Oo_{X} \ar[r]& 0. \\
          &\mathcal{T}\!\mathrm{or}^1_{\PP^N}(\Aa_{X\cap Y},\Oo_{X})& & \Tt_X(Y;\PP^N)\ar[ur,hookrightarrow] & & &
\end{tikzcd}  
\end{equation}
\\
and from the first horizontal sequence of \eqref{Tfg2} we also have

\begin{equation}\label{Tfg4}
        \begin{tikzcd}[cramped,
  row sep=small, column sep=small,
  ar symbol/.style = {draw=none,"\textstyle#1" description,sloped},
  isomorphic/.style = {ar symbol={\cong}},
  ]
 0 \ar[r] & \mathcal{T}\!\mathrm{or}^1_{\PP^N}(\Bb_{X\cap Y},\Oo_X)\ar[r] &\Kk\otimes\Oo_{X} \ar[dr,twoheadrightarrow]\ar[rr]&& \Oo_{X}(f) \ar[r] &\Bb_{X\cap Y}\otimes\Oo_{X} \ar[r]& 0. \\
          & & & \Ss\ar[ur,hookrightarrow] & & &
\end{tikzcd}  
\end{equation}
\\
On the other hand, since  $\Tt_{Y,\PP^N}=\Tt_{\PP^N}(-\log Y)$ we have a short exact sequence
\[
0\to\Tt_{Y,\PP^N}\to\Tt_{\PP^N}\to\Oo_{Y}(Y)\to 0
\]
and by tensoring with $\Oo_{X}$, we get
\[
0\to\Tt_{\PP^N}(-\log Y)\tensor\Oo_X\to\left(\Tt_{\PP^N}\right)_{|X}\to\Oo_{Y}(Y)\tensor \Oo_X\cong\Oo_{D}(D)\to 0.
\]   
Note that $\mathcal{T}\!\mathrm{or}^1_{\PP^N}(\Oo_{Y}(Y),\Oo_X)=0$, since $X$ and $Y$ does not have a common irreducible component. Furthermore, we have an exact sequence
\[
0\to\Tt_{X}(-\log D)\to\Tt_{X}\to \Jj_{D}(D)\to 0;
\]
refer \cite[(2.1)]{dolgachev2007logarithmic}. Then, considering
\[
0\to \Jj_{D}(D)\to\Oo_{D}(D)\to\Oo_{D^s}\to 0
\]
where $D^s$ is the closed subscheme of $D$ defined by the sheaf of ideals $\Jj_{D}$ so that $\Oo_{D^s}=\Oo_{D} / \Jj_{D}\cong\Bb_{X\cap Y}\tensor\Oo_X$,
we can construct a commutative diagram
\begin{equation}\label{Tfg5}
\begin{tikzcd}[
  row sep=normal, column sep=normal,
  ar symbol/.style = {draw=none,"\textstyle#1" description,sloped},
  isomorphic/.style = {ar symbol={\cong}},
  ]
   &  0 \ar[d] & 0 \ar[d] & 0\ar[d] \\
    0\ar[r]& \Tt_{X}(-\log D) \ar[d]\ar[r] & \Tt_{X} \ar[d]\ar[r] & \Jj_{D}(D) \ar[d] \ar[r] & 0  \\
    0\ar[r] & \Tt_{Y,\PP^N}\otimes\Oo_{X} \ar[r]\ar[d] & \left(\Tt_{\PP^N}\right)_{|X} \ar[d]\ar[r] & \Oo_{Y}(Y)\otimes\Oo_{X}\cong\Oo_{D}(D) \ar[d]\ar[r] & 0\\
    0\ar[r] & \Ss \ar[r]\ar[d] & \Oo_{X}(f) \ar[r]\ar[d] &\Oo_{D^s}\cong\Bb_{X\cap Y}\otimes\Oo_{X}\ar[r]\ar[d] & 0 \\
   & 0 & 0 & 0. 
\end{tikzcd}
\end{equation}
Now $\eqref{Tfg3},\eqref{Tfg4}$ and $\eqref{Tfg5}$ allow us to construct a commutative diagram 

\begin{equation*}
\begin{tikzcd}[
  row sep=normal, column sep=normal,
  ar symbol/.style = {draw=none,"\textstyle#1" description,sloped},
  isomorphic/.style = {ar symbol={\cong}},
  ]
  & & 0 \ar[d] & 0 \ar[d] \\
  0 \ar[r]& \Tt_X(Y;\PP^N)\ar[d,isomorphic]\ar[r] & \Tt_{X}(-\log D) \ar[d]\ar[r] & \mathcal{T}\!\mathrm{or}^1_{\PP^N}(\Bb_{X\cap Y},\Oo_{X}) \ar[d] \ar[r] & 0  \\
 0\ar[r] & \Tt_X(Y;\PP^N) \ar[r] & \Tt_{Y,\PP^N}\otimes\Oo_{X} \ar[d]\ar[r] & \Kk\otimes\Oo_{X} \ar[d]\ar[r] & 0\\
 & & \Ss\ar[d]\ar[r,isomorphic] & \Ss\ar[d]\\
 & & 0 & 0,
\end{tikzcd}
\end{equation*}
and the first horizontal sequence yields the conclusion.
\end{proof}

\begin{remark}\label{rem1}
By Proposition \ref{mainprop}, one can consider an injective composition map 
\[
\iota~:~\Tt_{X}(Y;\PP^N)\rightarrow\Tt_{X}(-\log D) \rightarrow\Tt_{X}.
\]
Setting $\Nn_{X, Y}:=\mathrm{coker}(\iota)$, one can have the following commutative diagram
\begin{equation*}
\begin{tikzcd}[
  row sep=normal, column sep=normal,
  ar symbol/.style = {draw=none,"\textstyle#1" description,sloped},
  isomorphic/.style = {ar symbol={\cong}},
  ]
  & & & 0\ar[d] &\\
  & 0\ar[d] &  & \mathcal{T}\!\mathrm{or}^1_{\PP^N}(\Qq,\Oo_X) \ar[d] & \\
  0 \ar[r] & \Tt_{X}(Y;\PP^N) \ar[d]\ar[r] & \Tt_{X} \ar[d,isomorphic]\ar[r] & \Nn_{X, Y} \ar[d] \ar[r] & 0  \\
  0 \ar[r] & \Tt_{X}(-\log D) \ar[d]\ar[r] & \Tt_{X} \ar[r] & \Jj_{D}(D) \ar[d]\ar[r] & 0,\\
  & \mathcal{T}\!\mathrm{or}^1_{\PP^N}(\Qq,\Oo_X) \ar[d] &  & 0 & \\
  & 0, &  &  &
\end{tikzcd}
\end{equation*}
where the upper horizontal exact sequence can be considered as the Poincar\'e residue sequence for $\Tt_{X}(Y;\PP^N)$.
Note that if $D$ is nonsingular, then we have $\Nn_{X, Y}\cong \Oo_{D}(D)$ the normal bundle of $D$ in $X$.
\end{remark}

Let $X=\VV(F)\subset\PP^N$ be a smooth hypersurface of degree $d$ and $H=\VV(G)\subset\PP^N$ be a hyperplane. Then we have
\[
0\to\Tt_{X\cap H, \PP^N}\to \Oo_{\PP^N}(1)^{\dirsum(N+1)}\overset{\xi_{X\cap H}}{\barrow}\Oo_{\PP^N}(d)\dirsum\Oo_{\PP^N}(1),
\]
which induces an exact sequence
\begin{equation}\label{hypeq1}
    0\to\Tt_{X\cap H, \PP^N}\to \Oo_{\PP^N}(1)^{\dirsum N}\overset{\bar{\xi}_{X\cap H}}{\barrow}\Oo_{\PP^N}(d).
\end{equation}
Note that $\bar{\xi}_{X\cap H}$ is surjective if and only if $X\cap H$ is smooth. In general, we have $\mathrm{Im}(\bar{\xi}_{X\cap H})=\Ii_{Z,\PP^N}(d)$ for $Z=\textrm{Sing}(X\cap H)$. By tensoring \eqref{hypeq1} with $\Oo_X$, we get
\[
0\to\Tt_X(H;\PP^N)\to \Oo_X(1)^{\dirsum N}\to\Ii_{Z,\PP^N}(d)\otimes \Oo_X\to 0.
\]
From the natural map
\[
\Tt_X(-\log X\cap H)\to\left(\Tt_{\PP^N}(-\log H)\right)_{|X}\cong\Oo_{X }(1)^{\dirsum N}
\]
one can obtain a commutative diagram
\begin{equation*}\label{commm2}
\begin{tikzcd}[
  row sep=normal, column sep=normal,
  ar symbol/.style = {draw=none,"\textstyle#1" description,sloped},
  isomorphic/.style = {ar symbol={\cong}},
  ]
  & & & 0\ar[d] &\\
  & 0\ar[d] &  & \mathcal{T}\!\mathrm{or}^1_X(\Oo_Z,\Oo_X) \ar[d] & \\
  0 \ar[r] & \Tt_{X}(H;\PP^N) \ar[d]\ar[r] & \Oo_{X}(1)^{\oplus N} \ar[d,isomorphic]\ar[r] & \Ii_{Z,\PP^N}(d)\tensor\Oo_X \ar[d] \ar[r] & 0  \\
  0 \ar[r] & \Tt_{X}(-\log X\cap H) \ar[d]\ar[r] & \Oo_{X}(1)^{\oplus N} \ar[r] & \Ii_{Z,X}(d) \ar[d]\ar[r] & 0\\
  & \mathcal{T}\!\mathrm{or}^1_X(\Oo_Z,\Oo_X) \ar[d] &  & 0 & \\
  & 0. &  &  &
\end{tikzcd}
\end{equation*}
Recall that $\Oo_Z=\Bb_{X\cap H}=\Qq$ in Proposition \ref{mainprop}.

\begin{definition}\cite[Page 4]{MirRoig2007lec}
Let $X$ be a smooth irreducible projective variety of dimension $n$ and let $H$ be an ample divisor on $X$. For a torsion-free sheaf $\Ee$ on $X$ one sets
\[
\mu_{H}(\Ee):=\frac{c_1(\Ee)\cdot H^{n-1}}{\mathrm{rank}(\Ee)},\hspace{.5cm} \mathrm{P}_{\Ee}(t):=\frac{\chi(\Ee\tensor\Oo_{X}(tH))}{\mathrm{rank}(\Ee)}.
\]
The sheaf $\Ee$ is \textit{$\mu$-semistable} (resp. \textit{semistable}) with respect to the polarization $H$ if and only if
\[
\mu_{H}(\Ff)\leq\mu_{H}(\Ee)\hspace{.5cm}(\text{resp.}\;\mathrm{P}_{\Ff}(t)\leq \mathrm{P}_{\Ee}(t) \;\;\text{for}\;\;t\gg 0) 
\]
for all non-zero subsheaves $\Ff\subset\Ee$ with $\mathrm{rank}(\Ff)<\mathrm{rank}(\Ee)$; if strict inequality holds then $\Ee$ is \textit{$\mu$-stable} (resp. \textit{stable}) with respect to $H$. We often omit the subscript in $\mu_{H}$ if the polarization is clear in the context. 
\end{definition}

\begin{proposition}\label{stab}
Let $X$ be a smooth hypersurface in $\PP^N$ of degree $d\ge 2$; in case $N=3$ and $d\ge 4$ we further assume that $X$ is very general. For a hyperplane $H\subset \PP^N$ such that $X \cap H$ is smooth, the net logarithmic tangent sheaf $\Tt_X(H; \PP^N)$ is $\mu$-stable. 
\end{proposition}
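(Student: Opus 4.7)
Because $X\cap H$ is smooth, placing $H=\{x_N=0\}$ by a linear change of coordinates shows that the submatrix map $(F_{x_0},\ldots,F_{x_{N-1}}):\Oo_X(1)^{\oplus N}\to\Oo_X(d)$ is surjective: a common zero on $X$ would, via Euler's identity $\sum x_iF_{x_i}=dF$, produce either a singular point of $X$ (when $x_N\neq 0$) or of $X\cap H$ (when $x_N=0$). Hence
\[
0\to E\to\Oo_X(1)^{\oplus N}\to\Oo_X(d)\to 0,\qquad E:=\Tt_X(H;\PP^N),
\]
is exact, so $E$ is a rank-$(N-1)$ vector bundle with $c_1(E)=(N-d)\Hh$ and $\mu_{\Hh}(E)=(N-d)/(N-1)$. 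The same conclusion (no common zero on $X$) makes $(F_{x_i})_{i<N}$ a regular sequence in the Cohen--Macaulay graded ring $S_X$ of dimension $N$, so the associated Koszul complex is exact in positive homological degrees both as sheaves on $X$ and as a complex of graded global sections.

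In the main Picard-rank-one setting --- Grothendieck--Lefschetz for $N\ge 4$, Noether--Lefschetz combined with the very-general hypothesis for $N=3$, $d\ge 4$ --- Hoppe's criterion reduces $\mu$-stability of $E$ to the vanishing $H^0(\wedge^q E(-a\Hh))=0$ for every $1\le q\le N-2$ and every integer $a$ with $a(N-1)\ge q(N-d)$. The Eagon--Northcott-type exact resolution induced by $E\hookrightarrow\Oo_X(1)^{\oplus N}$,
\[
0\to\wedge^q E\to\Oo_X(q)^{\binom{N}{q}}\to\Oo_X(q+d-1)^{\binom{N}{q-1}}\to\cdots\to\Oo_X(qd)\to 0,
\]
coincides (up to a global twist) with the degree-$q$ part of the Koszul complex of $(F_{x_i})_{i<N}$. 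Identifying $H^0(\wedge^q E(-a\Hh))$ with the space of Koszul $q$-cycles in degree $q-a$, graded Koszul exactness represents this space as the image of $H^0(\Oo_X((q-a-d+1)\Hh))^{\binom{N}{q+1}}$. The elementary inequality $\lceil q(N-d)/(N-1)\rceil\ge q-d+2$, valid because $q\le N-2<N-1$ forces $\lfloor q(d-1)/(N-1)\rfloor\le d-2$, implies $q-a-d+1<0$ throughout the Hoppe range; the source therefore vanishes and $\mu$-stability follows.

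For the remaining low-Picard-rank cases --- the smooth quadric and the smooth cubic in $\PP^3$ --- the bundle $E$ has rank $2$, so only sub-line bundles $L\hookrightarrow E$ can destabilize. The embedding $L\hookrightarrow \Oo_X(1)^{\oplus 3}$ bounds the numerical class of $L$ in $\Pic(X)$ to a short finite list (bidegrees $(a,b)$ with $a,b\le 1$ on $\PP^1\times\PP^1$; effective classes with small $\Hh$-intersection on the cubic), and for each candidate the destabilizing slope inequality together with the vanishing of the composition $L\to\Oo_X(1)^{\oplus 3}\to\Oo_X(d)$ forces a $\CC$-linear relation among $F_{x_0},F_{x_1},F_{x_2}$, again contradicting smoothness of $X$. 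The main delicacy of the plan lies in this latter case analysis rather than in the Koszul step: the richer Picard lattice of the smooth cubic surface requires a careful enumeration of candidate destabilizing classes, and the very-general hypothesis for $N=3$, $d\ge 4$ is precisely what lets us bypass such case work in higher degrees by returning $\Pic(X)$ to rank one.
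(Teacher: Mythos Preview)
Your treatment of the Picard-rank-one case is correct and essentially parallel to the paper's: both arguments use Hoppe's criterion together with the resolution of $\wedge^q E$ coming from the defining sequence, and your Koszul-exactness formulation is a clean way to package the required $H^0$-vanishing. The inequality $\lfloor q(d-1)/(N-1)\rfloor\le d-2$ is exactly what is needed, and your derivation is fine.

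The gap is in part~(b). Your claim that for each candidate destabilizing line bundle $L$ the vanishing of the composition $L\to\Oo_X(1)^{\oplus 3}\to\Oo_X(d)$ ``forces a $\CC$-linear relation among $F_{x_0},F_{x_1},F_{x_2}$'' is not justified. That vanishing only says $\sum s_iF_{x_i}=0$ on $X$ with $s_i\in H^0(L^{-1}(1))$; this is a $\CC$-linear relation precisely when $L\cong\Oo_X(1)$, but for other candidates the $s_i$ are genuine sections, not scalars. On the cubic surface, for instance, $L=\Oo_S(\ell_1-\ell_2)$ with $\ell_1,\ell_2$ disjoint lines has $L\cdot\Hh=0\ge\mu(E)$ and $h^0(L^{-1}(1))\ge 3$, and there is no evident way to extract a scalar relation from the resulting syzygy, nor can one reduce to $L=\Oo_S$ by precomposing with a section of $L$ since $h^0(L)=0$. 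Even on the quadric (where $d=2$ and the $F_{x_i}$ are linear) there are nontrivial linear Koszul syzygies among the partials, so the relation one obtains need not be scalar. The paper handles the cubic case by an entirely different and much longer argument: it restricts a hypothetical destabilizer to the rational curves in $|\Oo_S(2L-\sum_{i\in I}E_i)|$, $|\Oo_S(L-E_i-E_j)|$, $|\Oo_S(L)|$, and $|\Oo_S(2L+E_i-\sum E_j)|$, uses the known splitting types of $\Tt_S(-\log D)$ and $\Tt_S$ on these curves to pin $L$ down to $\Oo_S(-2L+\sum E_i)$, and then derives a contradiction from the $\mu$-stability of $\Tt_S$ together with a count of Eckhardt-type incidences. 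Your plan does not supply a substitute for this analysis.
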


\begin{proof}
Set $D=X \cap H$ and note that it is enough to show that $\Tt_X(-\log D)$ is $\mu$-stable by Proposition \ref{mainprop}. 

\noindent (a) \quad First assume that $\mathrm{Pic}(X)\cong \ZZ \langle \Oo_X(1) \rangle$, which is the case
\begin{itemize}
\item $N\ge 4$ by the Lefschetz theorem, or 
\item $N=3$, $d\ge 4$, and $X$ is very general; see \cite{Voisin}. 
\end{itemize}
By taking the dual of the second horizontal sequence in (\ref{commm2}), we get 
\begin{equation}\label{XHeq}
0\to\Oo_{X}(-d)\to\Oo_{X}(-1)^{\oplus N}\to\Tt_X(-\log D)^{\vee}\cong \Omega_X^1(\log D) \to 0.    
\end{equation}
Set $\Ee:=\Omega_X^1(\log D)\tensor\Oo_{X}(d)$. Applying \cite[Theorem 4.1.3]{hirzebruch1966topological} to (\ref{XHeq}), one gets
\begin{equation}\label{XHeq1}
0\to \left(\bigwedge^{p-1}\Ee\right)(m) \to \Oo_{X}(p{\cdot}(d-1)+m)^{\dirsum{\binom{N}{p}}} \to \left( \bigwedge^{p}\Ee\right) (m)\to 0
\end{equation}
for $1\leq p \leq N-1$ and $m\in \ZZ$. Now for the $\mu$-stability of $\Ee$, it suffices to show that 
\[
\mathrm{h}^0\left( \left(\bigwedge^{p}\Ee\right)(m)\right)=0
\]
for all $m\leq -p\cdot\mu(\Ee)$ and for all $1\leq p < N-1$ ; see \cite[Proposition 1.1]{bohnhorst} and \cite[Proposition 2.12]{MirRoig2007lec}.
\vspace{.1cm}

%\noindent (a1) \quad For the case $N=3$ and $d\geq 4$, from the sequence (\ref{XHeq}) twisted by $\Oo_X(d+m)$, we have
%\begin{equation}\label{XHeq2}
%\begin{array}{ll}
%    \mathrm{h}^0(\Ee(m)) & =\mathrm{h}^0\left(\Oo_{X}(d-1+m)^{\dirsum{N}}\right)-\mathrm{h}^0\left(\Oo_{X}(m)\right) \\[9pt]
%                         & \displaystyle =N\binom{N+d-1+m}{N}-N\binom{N-1+m}{N}-\binom{N+m}{N}+\binom{N-d+m}{N}.                       
%\end{array}
%\end{equation}
%Note that $m=1-d$ is the least integer $m$ that yields non-vanishing of $\mathrm{H}^0(\Ee(m))$, and so $\mathrm{H}^0(\Ee(m))=0$ for all $m\leq -1{\cdot}\mu(\Ee)=\frac{N(1-d)}{N-1} < 1-d$.  

%\vspace{.3cm}

%\noindent (a2) \quad If $N\geq 4$, 

\noindent From \eqref{XHeq1} we have
\begin{equation}\label{eegg123}
\begin{array}{ll}
    \mathrm{h}^0\left(\left(\bigwedge^{p}\Ee\right)(m)\right)&=\mathrm{h}^0\left(\Oo_{X}(p{\cdot}(d-1)+m)^{\dirsum\binom{N}{p}}\right)-\mathrm{h}^0\left(\left(\bigwedge^{p-1}\Ee\right)(m)\right)+\mathrm{h}^1\left(\left(\bigwedge^{p-1}\Ee\right)(m)\right)\\[.3cm]
    &=\displaystyle\sum_{k=0}^{p}(-1)^{p-k}\binom{N}{k}\;\mathrm{h^0}\Big(\Oo_{X}(k\cdot(d-1+m))\Big) +\sum_{k=0}^{p-1}(-1)^{p-1-k}\;\mathrm{h}^1\left(\left(\bigwedge^{k}\Ee\right)(m)\right)\\[.6cm]
    &\displaystyle=\sum_{k=0}^{p}(-1)^{p-k}\binom{N}{k}\left[\binom{N+k\cdot(d-1)+m}{N}-\binom{N+k\cdot(d-1)-d+m}{N}\right]\\[.6cm]
    &\hspace{.3cm}\displaystyle+\sum_{k=0}^{p-1}(-1)^{p-1-k}\;\mathrm{h}^1\left(\left(\bigwedge^{k}\Ee\right)(m)\right)
\end{array}
\end{equation}
for $1\leq p<N-1$ and $m\in \ZZ$. On the other hand, again from $\eqref{XHeq1}$ we get inductively that
\begin{equation*}
\mathrm{h}^1\left(\left(\bigwedge^{q}\Ee\right)(m)\right)=\mathrm{h}^2\left(\left(\bigwedge^{q-1}\Ee\right)(m)\right)=\cdots=\mathrm{h}^q\left(\Ee(m)\right)=0
\end{equation*}
for $1\leq q < N-2$ and $m\in\ZZ$. Hence, from \eqref{eegg123}, we have $\mathrm{h}^0(\left(\bigwedge^{p}\Ee\right)(m))=0$ for all $m\leq p{\cdot}\mu(\Ee)=-p\cdot\frac{N(1-d)}{N-1}<p(1-d)$ and for all $1\leq p<N-1$. Therefore, $\Ee$ is $\mu$-stable.

\vspace{.3cm}

\noindent (b) \quad Now consider the cases $(N,d)\in \{  (3,2), (3,3)\}$. Since the $\mu$-stability of $\Tt_X(-\log D)$ with $d=2$ is already shown in \cite{BHM}, we assume that $d=3$, i.e. $X$ is a cubic surface, realized from a blow-up $\pi : X \rightarrow \PP^2$ at six points $Z=\{ p_1, \dots, p_6\}$ in general position.
Recall that the exceptional divisors $E_i:=\pi^{-1}(p_i)$ associated to the points $p_i$ together with $L$ a divisor class in $\pi^*\Oo_{\PP^2}(1)$, freely generate $\mathrm{Pic}(X)\cong \ZZ \langle L, E_1, \dots, E_6\rangle \cong \ZZ^{\oplus 7}$ with intersection numbers
\[
L^2=1,\: L.E_i=0,\: E_i.E_j=0 \:\mbox{ and }\: E_i^2=-1
\]
for each $i\ne j$. The anticanonical line bundle $\Oo_X(-K_X) \cong \Oo_X(3L-\sum_{i=1}^6 E_i)$ turns out to be very ample so that it induces the original embedding $\iota: X \hookrightarrow \PP^3$, and in particular, we have $\Oo_X(1)\cong \Oo_X(-K_X)$. The strategy of the proof follows the proof of \cite[Theorem 5.4]{HMPV}. Suppose that $\Tt_{X}(-\log D)$ is not $\mu$-stable and there is a destabilizing line bundle
\[
\Ll:=\Oo_X(aL+\sum_{i=1}^6 b_iE_i)\hookrightarrow \Tt_{X}(-\log D).   
\]
Then we have 
\begin{equation}\label{eq1}
3a+\sum_{i=1}^6 b_i=\mu\left(\Oo_X(aL+\sum_{i=1}^6 b_iE_i)\right)\geq\mu(\Tt_{X}(-\log D))=0.
\end{equation}
From \cite[Lemma 2.2]{HMPV}, for a generic $C\in|\Oo_X(2L-\sum_{i\in I}E_i)|$ where $I\subset\{1,\ldots,6\}$ with $|I|=4$, we have $|(D\cap C)_{\text{red}}| =2$ and then considering a commutative diagram
\begin{equation}\label{key}
\begin{tikzcd}[
  row sep=normal, column sep=normal,
  ar symbol/.style = {draw=none,"\textstyle#1" description,sloped},
  isomorphic/.style = {ar symbol={\cong}},
  ]
  & 0\ar[d] & 0 \ar[d] &  \\
  0 \ar[r] & \Oo_{C} \ar[d]\ar[r] & \left(\Tt_{X}(-\log D)\right)_{|C} \ar[d]\ar[r] & \Oo_{C} \ar[d,isomorphic] \ar[r] & 0  \\
  0 \ar[r] & \Tt_{C}\cong\Oo_{C}(2) \ar[d]\ar[r] & \left(\Tt_X\right)_{|C}\cong\Oo_{C}\oplus\Oo_{C}(2) \ar[d]\ar[r] & \Oo_{C} \ar[r] & 0\\
 & \Oo_{p}\dirsum\Oo_{q}\ar[d]\ar[r,isomorphic] & \left(\Oo_{D}(D)\right)_{|C}\ar[d] & \\
 & 0 & 0 & 
\end{tikzcd}
\end{equation}
we get that $\left(\Tt_X(-\log D)\right)_{|C}\cong\Oo_{C}^{\dirsum 2}$. This implies that $2a+\sum_{i\in I}b_i\leq 0$ and since the set $I$ is an arbitrary subset of $\{1,\ldots,6\}$, we have that
\begin{equation}\label{eq2}
 \left\{
\begin{array}{ll}
2a+b_1+b_2+b_3+b_4\leq 0;\\ 
2a+b_3+b_4+b_5+b_6\leq 0;\\ 
2a+b_1+b_2+b_5+b_6\leq 0.
\end{array}
\right.
\end{equation}
Hence, together with \eqref{eq1} we get that
\begin{equation}\label{eq3}
    3a+\sum_{i=1}^6 b_i=0. 
\end{equation}
Similarly, for a line $L_{ij}\in|\Oo_X(L-E_i-E_j)|$ with $1\leq i<j\leq 6$, by \cite[Lemma 2.2]{HMPV}  we have that $\left(\Tt_X(-\log D)\right)_{|L_{ij}}\cong\Oo_{L_{ij}}(-1)\dirsum\Oo_{L_{ij}}(1)$. Restricting the destabilizing line bundle to $L_{ij}$, we obtain that $a+b_i+b_j\leq 1$ for any $1\leq i < j \leq 6$. Moreover, since $3a+\sum_{i=1}^6 b_i=(a+b_1+b_2)+(2a+b_3+b_4+b_5+b_6)=0$ and since \eqref{eq2} we get that $a+b_1+b_2\geq 0$ as well. Indeed, in general 
\[
0 \leq a+b_i+b_j\leq 1\;\; \text{for any}\;\; 1\leq i<j\leq 6
\]
and thus $a+b_i+b_j=0$ for any $1\leq i<j\leq 6$ because of the equality $\eqref{eq3}$. Furthermore, for a curve $C\in|\Oo_X(L)|$ we have 
\[
\left(\Tt_X(-\log D)\right)_{|C}\cong\Oo_{C}(-1+e)\oplus\Oo_{C}(1-e)
\]
with $e\in\{0,1\}$ while $\left(\Oo_X(aL+\sum_{i=1}^6 b_iE_i)\right)_{|C}\cong\Oo_{C}(a)$. Hence, we have $a\leq 1$ and $\Ll \cong \Oo_X(-2kL+\sum_{i=1}^6 kE_i)$ for some nonnegative integer $k$. Restricting it to the line $\widehat{L}_i\in|\Oo_{X}(2L+E_i-\sum_{i=1}^6 E_i)|$, we get that 
\[
\Ll_{|\widehat{L}_i} \cong \left(\Oo_X(-2kL+\sum_{i=1}^6 kE_i)\right)_{|\widehat{L}_i}\cong\Oo_{\widehat{L}_i}(k)
\]
with $\left(\Tt_X(-\log D)\right)_{|\widehat{L}_i}\cong\Oo_{\widehat{L}_i}(-1)\dirsum\Oo_{\widehat{L}_i}(1)$ and this implies that $k\leq 1$, so thus $k\in\{0,1\}$. In the case of $k=0$, we would have $\mathrm{h}^0(\Tt_S(-\log D))>0$. Then this would imply that $\mathrm{h}^0(\Tt_S)>0$, which is not true; see \cite[Corollary 3.5]{huybrechts2023cubic}. Thus one can assume that $k=1$. Then one can consider $\Ll$ as a subsheaf of $\Tt_X$ and let $\Ll\otimes \Oo_X(D')$ be its saturation in $\Tt_X$. First assume that $D'$ is an effective divisor. In particular, its cokernel is torsion-free, say $\Ii_{Z'', X}\otimes \Gg$ for a $0$-dimensional subscheme $Z''$ and the line bundle $\Gg:=\Ll^{-1}\otimes \Oo_X(-K_X-D')$.  
%{\color{blue}and obtain a nonzero map
%\begin{equation}\label{destab}
%\Ll:=\Oo_X(-2L+\sum_{i=1}^6 E_i) \overset{\alpha}{\hookrightarrow} \Tt_X(-\log D).
%\end{equation}
%obtain an extension
%\begin{equation}\label{ext1}
%0\to \Oo_X(-2L+\sum_{i=1}^6 E_i) \to \Tt_X(-\log D) \to \Ii_{Z', X}\otimes \Oo_X(2L-\sum_{i=1}^6 %E_i)\to 0,    
%\end{equation}
%where $Z'$ is a 0-dimesional subscheme of $X$ with $|Z'|=7$. 
%Compositing \eqref{destab} with the first map of the dual residue sequence $\Tt_{X}(-\log D)\overset{\beta}{\hookrightarrow}\Tt_{X}$, recall that there exists a unique effective divisor $D'$ on $X$ such that the map $\beta\circ\alpha$ factors through the inclusion $\Ll \hookrightarrow \Ll\tensor\Oo_{X}(D')$ and such that the cokernel is torsion-free. 
Then we have a commutative diagram
\begin{equation}\label{stabdiag}
\begin{tikzcd}[
  row sep=normal, column sep=normal,
  ar symbol/.style = {draw=none,"\textstyle#1" description,sloped},
  isomorphic/.style = {ar symbol={\cong}},
  ]
  & 0\ar[d] & 0 \ar[d] & 0 \ar[d] \\
  0 \ar[r] & \Ll \ar[d]\ar[r] & \Tt_{X}(-\log D) \ar[d]\ar[r] & \Ii_{Z',X}\tensor\Ll^{-1} \ar[d] \ar[r] & 0  \\
  0 \ar[r] &  \Ll\tensor\Oo_{X}(D') \ar[d]\ar[r] & \Tt_{X} \ar[d]\ar[r] & \Ii_{Z'',X}\tensor\Gg \ar[d] \ar[r] & 0\\
  0 \ar[r] & \Ll\tensor\Oo_{D'}(D')\ar[d]\ar[r] & \Oo_{D}(D) \ar[d] \ar[r] & \Kk \ar[d]\ar[r] & 0\\
 & 0 & 0 & 0,
\end{tikzcd}
\end{equation}
where $Z'$ is a $0$-dimensional subschemes of $X$ and $\Kk\cong \Oo_W(D)$ for a subscheme $W\subset D$. Since $D$ is irreducible, we have $D'=D$ and $\Ll\tensor\Oo_{X}(D')\cong\Oo_{X}(L)$. From $\mu(\Tt_{X})=\frac{3}{2}$ while $\mu(\Oo_{X}(L))=3$, we get that the tangent bundle $\Tt_X$ is not $\mu$-stable, which is not true; see \cite{Fa}. Hence $D'$ is trivial and so $\Ll$ is a line subbundle of $\Tt_X$. Then we have an exact sequence
\begin{equation}\label{ext2}
0\to \Ll \to \Tt_X \to \Ii_{Z'', X}\otimes \Ll^{-1}\tensor\Oo_{X}(-K_X)\to 0,
\end{equation}
where $Z''$ is a 0-dimensional subscheme of $X$ with $|Z''|=7$. If we restrict the sequence $\eqref{ext2}$ to the line $L_{ij}$, we obtain that
\begin{equation*}
0\to\Oo_{L_{ij}}(e) \to \left(\Tt_X\right)_{|L_{ij}} \to \Oo_{L_{ij}}(1-e) \to 0    
\end{equation*}
with $e=|Z''\cap L_{ij}|$. Then the only possibility would be $e=2$, because $\left(\Tt_X\right)_{|L_{ij}}\cong\Oo_{L_{ij}}(-1)\dirsum\Oo_{L_{ij}}(2)$. Since there are exactly $15$ lines of type $L_{ij}$ with $1\le i<j\le 6$, we obtain $|Z''|\geq \frac{15}{3}\times 2=10$, which is impossible. Indeed, through each point in $Z''$ there pass at most three lines $L_{ij}$ and the point with three passing lines is called an \emph{Eckhart} point.
%If we restrict the sequence $\eqref{ext2}$ to the line $L_{ij}$, we obtain an extension
%\begin{equation*}
%0\to\Oo_{L_{ij}}(a) \to \left(\Tt_X\right)_{|L_{ij}}\cong\Oo_{L_{ij}}(-1)\dirsum\Oo_{L_{ij}}(2) \to \Oo_{L_{ij}}(1-a) \to 0    
%\end{equation*}
%with $a=|Z'\cap L_{ij}|$. Then the only possibility would be $a=2$, since the extension splits whenever $a\geq 0$. This is impossible, because there are exactly 15 elements in $|\Oo_X(L-E_i-E_j)|$, so it would need $|Z'|\geq \frac{15}{3}\times 2=10$ even if we assuming every points in $Z'$ is an \emph{Eckardt point}; an intersection point of three lines on a cubic surface. Therefore, $\Tt_X(-\log D)$ is $\mu$-stable with respect to $\Oo_{X}(1)$.
\end{proof}

\begin{remark}
It is still unknown whether the condition in Proposition \ref{stab} for $X$ to be `very general' can be discarded or not. The starting case to extend the result can be the Fermat surface $X_m=\VV (x_0^m+\dots+x_3^m)$ for $m\ge 4$, whose Picard number is described in \cite{Shioda}.
\end{remark}

% F, G 에 관한 내용 제거
\begin{example}\label{exQ}
Consider the net logarithmic tangent sheaf on $Q=\VV(x_0x_3-x_1x_2)$ associated to $H=\VV(x_3)$ in $\PP^3$, namely $\Tt_{Q}(H;\PP^3)$. we have the logarithmic tangent sheaf $\Tt_{Q\cap H,\PP^3}$ associated to a complete intersection $Q\cap H\subset\PP^3$ with an exact sequence
\[
0\to\Tt_{Q\cap H}\to \Oo_{\PP^3}(1)^{\dirsum{4}}\xrightarrow[\hspace*{1.5cm}]{\xi_{Q\cap H}} \Oo_{\PP^3}(2)\dirsum\Oo_{\PP^3}(1).
\]
Note that the map $\xi_{Q\cap H}$ can be represented by the matrix
\[
\begin{pmatrix}
x_3  &   -x_2 &   -x_1 &   x_0\\
0   &   0   &   0   &   1
\end{pmatrix}
\]
and then we have a commutative diagram
\begin{equation}\label{diagQH}
\begin{tikzcd}[
  row sep=normal, column sep=normal,
  ar symbol/.style = {draw=none,"\textstyle#1" description,sloped},
  isomorphic/.style = {ar symbol={\cong}},
  ]
         &                                     & 0 \ar[d] && 0 \ar[d] \\
 0\ar[r] & \Tt_{Q\cap H}\ar[d,isomorphic]\ar[r] & \Oo_{\PP^3}(1)^{\dirsum\;3} \ar[d]\ar[rr,"(x_3\;-x_2\;-x_1)"] && \Ii_{p,\PP^3}(2) \ar[d]\ar[r] &  0  \\
 0\ar[r] & \Tt_{Q\cap H}\ar[r]& \Oo_{\PP^3}(1)^{\dirsum\;4} \ar[d]\ar[rr,"\xi_{Q\cap H}"] && \Aa_{Q\cap H} \ar[d]\ar[r] & 0\\
         &                   & \Oo_{\PP^3}(1) \ar[d]\ar[rr,isomorphic] && \Oo_{\PP^3}(1)\ar[d]  & \\
         &                   &    0 && 0 &
\end{tikzcd}
\end{equation}
where $\Ii_{p,\PP^3}$ is the ideal sheaf of a point $p=[1:0:0:0]\in \PP^3$. Since $\Ext^1_{\PP^3}(\Oo_{\PP^3}(1),\Ii_{p,\PP^3}(2))=0$ we have $\Aa_{Q\cap H}\cong\Ii_{p,\PP^3}(2)\oplus\Oo_{\PP^3}(1)$, so $\mathcal{T}\!\mathrm{or}^1_{\PP^3}(\Aa_{Q\cap H},\Oo_{Q})\cong\mathcal{T}\!\mathrm{or}^1_{\PP^3}(\Ii_{p,\PP^3}(2),\Oo_{Q})$.
Therefore, we get a short exact sequence for $\Tt_{Q}(H;\PP^3)$ :

\begin{equation}
0\to\Tt_{Q}(H;\PP^3)\to\Oo_{Q}(1,1)^{\dirsum 3}\to\Ii_{p,\PP^3}(2)\tensor{\Oo_{Q}}\to 0.
\end{equation}
Now, we view the quadric surface $Q$ as a multiprojective space $\PP^1\times\PP^1$. Taking bihomogeneous coordinates $a_0,a_1,b_0,b_1$, the quadric surface $Q\subset\PP^3$ is the image of the Segre map 
\[
\PP^1\times\PP^1\ni [a_0:a_1,b_0:b_1] \longmapsto [a_0b_0:a_0b_1:a_1b_0:a_1b_1]\in \PP^3.
\]
We also write $\Oo_{Q}(a,b)$ for the line bundle of bidegree $(a,b)$ on $Q$. 
Considering a locally free resolution
\[
0 \to \Oo_{Q}(1,1) \to \Oo_{Q}(1,2)\dirsum\Oo_{Q}(2,1)\xlongrightarrow{(a_1\;\;b_1)}\Ii_{p,Q}(2,2)\to 0,
\]
we can construct a commutative diagram
\begin{equation*}\label{diag2}
\begin{tikzcd}[
  row sep=normal, column sep=normal,
  ar symbol/.style = {draw=none,"\textstyle#1" description,sloped},
  isomorphic/.style = {ar symbol={\cong}},
  ]
         & 0 \ar[d]         & 0\ar[d]                                           & 0\ar[d]\\
         &   \Kk \ar[d]     & \Oo_{Q}(1,0)\ar[d]   & \Oo_{p}\ar[d]\\
 0\ar[r] & \Tt_Q(H;\PP^3)\ar[d]\ar[r]       & \Oo_{Q}(1,1)^{\dirsum\;3}\ar[d,"\psi"]\ar[r] &\Ii_{p,\PP^3}(2)\tensor\Oo_{Q} \ar[d]\ar[r] &  0  \\
 0\ar[r] & \Oo_Q(1,1) \ar[d]\ar[r] & \Oo_{Q}(1,2)\dirsum\Oo_{Q}(2,1)  \ar[d]\ar[r, "(a_1\;\;b_1)"] & \Ii_{p,Q}(2,2) \ar[r]\ar[d] & 0\\
         & \Qq \ar[d]   &  \Oo_{A} \ar[d]& 0    & \\
         &  0                 &    0 &  &
\end{tikzcd}
\end{equation*}
with $
\psi =
\begin{pmatrix}
    0 & -b_0 & 0\\
    a_1 & 0 & -a_0
\end{pmatrix}
$ and $A\in|\Oo_{Q}(1,0)|$. Then the snake lemma gives an exact sequence
\[
0\to\Kk\to\Oo_{Q}(1,0)\to\Oo_{p}\xrightarrow{0}\Qq\to\Oo_{A}\to 0,
\]
so that $\Kk\cong\Ii_{p,Q}(1,0)$ and $\Qq\cong\Oo_{A}$.
Thus we have
\begin{equation}\label{eqTQH1}
0 \to \Ii_{p,Q}(1,0)\to \Tt_Q(H;\PP^3) \to \Oo_{Q}(0,1)\to 0.   
\end{equation}
Note that this is not a split exact sequence. Furthermore, if we replace the map $\psi$ by 
$\psi'=\begin{pmatrix}
    b_1 & -b_0 & 0\\
    0 & 0 & -a_0
\end{pmatrix},$ it yields another exact sequence for $\Tt_{Q}(H;\PP^3)$ :
\begin{equation}\label{eqTQH2}
    0 \to \Ii_{p,Q}(0,1)\to \Tt_Q(H;\PP^3) \to \Oo_{Q}(1,0)\to 0,    
\end{equation}
which does not split too. On the other hand, since $Q\cap H$ is a pair of lines $A+B\subset Q$ with $A\in|\Oo_{Q}(1,0)|$ and $B\in|\Oo_Q(0,1)|$, we have $\Tt_{Q}(-\log Q\cap H)\cong \Oo_{Q}(1,0)\oplus\Oo_{Q}(0,1)$; refer to \cite[Proposition 6.2]{BHM}. Now considering the following commutative diagram
\begin{equation*}\label{diag3}
\begin{tikzcd}[
  row sep=normal, column sep=normal,
  ar symbol/.style = {draw=none,"\textstyle#1" description,sloped},
  isomorphic/.style = {ar symbol={\cong}},
  ]
         & 0 \ar[d]         & 0\ar[d]                                           & \\
 0\ar[r] & \Ii_{p,Q}(1,0) \ar[d]\ar[r]     & \Tt_{Q}(H;\PP^3)\ar[d]\ar[r]   & \Oo_{Q}(0,1) \ar[d,isomorphic]\ar[r] & 0\\
 0\ar[r] & \Oo_{Q}(1,0) \ar[d]\ar[r]   & \Tt_{Q}(-\log Q\cap H) \ar[d]\ar[r] &\Oo_{Q}(0,1) \ar[r] &  0  \\    
         & \Oo_{p} \ar[d]\ar[r,isomorphic]   &  \mathcal{T}\!\mathrm{or}^1_{\PP^3}(\Bb_{Q\cap H},\Oo_{Q}) \ar[d]&     & \\
         &  0                 &  0   &  &
\end{tikzcd}
\end{equation*}
we get the following short exact sequence
\begin{equation}\label{eqTQH3}
0\to \Tt_{Q}(H;\PP^3)\to\Oo_{Q}(1,0)\dirsum\Oo_{Q}(0,1)\to \Oo_p \to 0.    
\end{equation}
Note that $c_1(\Tt_{Q}(H;\PP^3))=(1,1)$ and $c_2(\Tt_{Q}(H;\PP^3))=2$.
\end{example}

\begin{proposition}\label{stabbb}
    For any hyperplane $H\subset \PP^3$, the net logarithmic tangent sheaf $\Tt_Q(H;\PP^3)$ is stable with respect to an ample line bundle $\Hh=\Oo_{Q}(1,1)$.
\end{proposition}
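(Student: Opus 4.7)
The plan is to split into two cases according to whether $D := Q \cap H$ is smooth (equivalently, $H$ is transverse to $Q$) or singular (equivalently, $H$ is tangent to $Q$ at some point $p$). In both cases a direct Chern class computation gives $c_1(\Tt_Q(H;\PP^3)) = (1,1)$, so the slope with respect to $\Hh = \Oo_Q(1,1)$ equals $1$. Since the rank is $2$ and $Q$ is a smooth surface, any rank-$1$ subsheaf has a saturation that is a line bundle $\Oo_Q(a,b)$; and for rank-$2$ torsion-free sheaves $\mu$-stability implies (Gieseker) stability by a comparison of leading Hilbert polynomial terms. The task thus reduces to ruling out inclusions $\Oo_Q(a,b) \hookrightarrow \Tt_Q(H;\PP^3)$ with $a+b \geq 1$.

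When $D$ is smooth, Proposition \ref{stab} applied to the smooth quadric $Q \subset \PP^3$ of degree $2$ (the ``very general'' hypothesis does not arise since $d=2$) gives $\mu$-stability at once. The substance therefore lies in the tangent case. Here, by transitivity of $\Aut(\PP^3)$ on pairs consisting of a point of $Q$ and its tangent plane, one may reduce to the normal form $H=\VV(x_3)$ treated in Example \ref{exQ}, obtaining the two non-split short exact sequences
\begin{equation*}
0 \to \Ii_{p,Q}(1,0) \to \Tt_Q(H;\PP^3) \to \Oo_Q(0,1) \to 0
\quad\text{and}\quad
0 \to \Ii_{p,Q}(0,1) \to \Tt_Q(H;\PP^3) \to \Oo_Q(1,0) \to 0.
\end{equation*}

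Given a supposed destabilizing inclusion $\Oo_Q(a,b) \hookrightarrow \Tt_Q(H;\PP^3)$ with $a+b\ge 1$, I would post-compose with the first quotient map to $\Oo_Q(0,1)$ and argue by dichotomy. If the composition is nonzero, then $\Hom(\Oo_Q(a,b),\Oo_Q(0,1))\neq 0$ forces $a\le 0$ and $b\le 1$, which combined with $a+b\ge 1$ yields $(a,b)=(0,1)$; but then the composition is an isomorphism, splitting the first sequence and contradicting its non-splitness. If the composition is zero, then $\Oo_Q(a,b)$ factors through $\Ii_{p,Q}(1,0)\subsetneq \Oo_Q(1,0)$, so $a\le 1, b\le 0$ and hence $(a,b)=(1,0)$; but $\Oo_Q(1,0)$ cannot embed into its proper subsheaf $\Ii_{p,Q}(1,0)$, as this would yield a nonzero global section of $\Ii_{p,Q}$, which does not exist. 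Either possibility gives a contradiction.

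The main obstacle I anticipate is the reduction to normal form: one must confirm that the structural sequences derived in Example \ref{exQ} genuinely describe $\Tt_Q(H;\PP^3)$ for every tangent hyperplane $H$, not just $\VV(x_3)$. This is provided by the transitivity of the subgroup of $\Aut(\PP^3)$ stabilizing $Q$ on the tangent data of $Q$, but it is the step that makes the rest of the case analysis essentially formal; after it, the proof closes by the two-case dichotomy above, turning on the non-splitness of the sequences and the strict containment $\Ii_{p,Q}(1,0)\subsetneq \Oo_Q(1,0)$.
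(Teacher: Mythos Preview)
Your argument has a genuine gap in the singular case. You assert that on the smooth surface $Q$ ``any rank-$1$ subsheaf has a saturation that is a line bundle'' and then aim only for $\mu$-stability. This is true when the ambient rank-$2$ sheaf is locally free, but here $\Tt_Q(H;\PP^3)$ is merely torsion-free, with a singularity at the vertex $p$. In fact the very sequence you quote,
\[
0 \to \Ii_{p,Q}(1,0) \to \Tt_Q(H;\PP^3) \to \Oo_Q(0,1) \to 0,
\]
exhibits $\Ii_{p,Q}(1,0)$ as a \emph{saturated} rank-$1$ subsheaf (the quotient is a line bundle), which is not itself a line bundle and has slope $a+b=1=\mu(\Tt_Q(H;\PP^3))$. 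So $\Tt_Q(H;\PP^3)$ is \emph{not} $\mu$-stable in the tangent case, only strictly $\mu$-semistable; the implication ``$\mu$-stable $\Rightarrow$ Gieseker stable'' is therefore unavailable, and ruling out line-bundle inclusions with $a+b\ge 1$ is not sufficient.

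The paper's proof addresses exactly this point: it tests against arbitrary rank-$1$ torsion-free subsheaves $\Ii_{Z,Q}(a,b)$ and compares reduced Hilbert polynomials directly. In the borderline case $\Ff=\Ii_{p,Q}(1,0)$ (or $\Ii_{p,Q}(0,1)$) one finds $\mathrm{P}_{\Ff}(t)=t^2+3t+1 < t^2+3t+\tfrac{3}{2}=\mathrm{P}_{\Ee}(t)$, which is what gives Gieseker stability. Your argument is easily repaired by adding this Hilbert-polynomial comparison for the two ideal-sheaf subobjects, but as written the reduction to line bundles and to $\mu$-stability is incorrect.
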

\begin{proof}
    %By the Riemann-Roch theorem,
%$\chi(\Ii_{Z,Q}(a,b)\tensor\Oo_{Q}(t,t))=t^2+(2+a+b)t+(a+1)(b+1)-|Z|$.
%Let $\Ff:=\Tt_{Q}(H;\PP^3)$ and note that $c_1(\Ff)=(1,1)$ and $c_2(\Ff)=2$. By the Riemann-Roch theorem, we obtain 
%\[
%\mathrm{P}_{\Ff}(t)=t^2+3t+\frac{3}{2}
%\]
%and
%\[
%p(\Ii_{Z,Q}(a,b)(t))=t^2+(2+a+b)t+(a+1)(b+1)-|Z|.
%\]
%for any nonzero rank one subsheaf $\Ff$ of $\Tt_{Q}(H;\PP^3)$ 
%$$\chi(\Ff\tensor\Oo_{Q}(t,t))\neq\cfrac{\chi(\Tt_{Q}(H;\PP^3)\tensor\Oo_{Q}(t,t))}{2}.$$
%Taking into account \eqref{eqTQH1} and 
If $H\cap Q$ is a smooth conic, then we have $\Tt_Q(H; \PP^3) \cong \Tt_Q(-\log H \cap Q)$ and it is stable; see \cite[Proposition 2.10]{huh2011} and \cite[Remark 4.7]{BHM}. Now assume that $H\cap Q$ is a singular conic with the vertex $p\in Q$. Then the sheaf $\Tt_{Q}(H;\PP^3)$ is torsion-free, but not locally-free with the singularity at $p$. Pick a rank one subsheaf of $\Tt_{Q}(H;\PP^3)$, say $\Ii_{Z,Q}(a,b)$ for a $0$-dimensional subscheme $Z\subset Q$ with $(a,b)\in\ZZ^{\dirsum 2}$. Then we get an exact sequence
\begin{equation}\label{eqQH1}
    0\to \Ii_{Z,Q}(a,b) \to \Tt_{Q}(H;\PP^3)\to \Ii_{Z',Q}(1-a,1-b)\to 0,
\end{equation}
where $Z'$ is a $0$-dimensional subschemes of $Q$ with $a+b-2ab+|Z|+|Z'|=2$. Setting $\Ee:=\Tt_{Q}(H;\PP^3)$ and $\Ff:=\Ii_{Z,Q}(a,b)$, we need to show that 
\begin{equation} \label{eqQH2}
\mathrm{P}_{\Ff}(t)=t^2+(a+b+2)t+(a+1)(b+1)-|Z| < t^2+3t+\frac{3}{2}=\mathrm{P}_{\Ee}(t).
\end{equation}
for $t\gg 0$. 
%In other words, it suffices to show that $(a+b+2)<3$. 
We begin with the composition map
%\begin{equation}
%    \begin{array}{ll}
%         &  \\
%         & 
%    \end{array}
%\end{equation}
\[
g:\Ii_{Z,Q}(a,b)\hookrightarrow \Tt_{Q}(H;\PP^3)\rightarrow\Oo_{Q}(0,1)
\]
with the surjection in \eqref{eqTQH1}. Then there are two possibilities:

\vspace{.1cm}

\noindent (a) \quad 
If $g$ is trivial, then we have an injection $\Ii_{Z,Q}(a,b)\hookrightarrow\Ii_{p,Q}(1,0)$. This implies that $a\leq 1$, $b\leq 0$, and $|Z|\geq 1$. When $(a,b)\neq (1,0)$, \eqref{eqQH2} holds. Suppose $(a,b)=(1,0)$. Then from \eqref{eqQH1}, we have $|Z|=1$. In other words, $\Ii_{Z,Q}(a,b)\cong\Ii_{p,Q}(1,0)$. In this case, we have
\[
\mathrm{P}_{\Ff}(t) = t^2+3t+1
\]
and thus \eqref{eqQH2} holds.
%, we have
%\[
%\mathrm{P}_{\Ff_{(a,b)}}(t)\;\leq\; \mathrm{P}_{\Ff_{(1,0)}}(t)=t^2+3t+1 \;<\; t^2+3t+\frac{3}{2}=\mathrm{P}_{\Ee}(t)
%\]
%for sufficiently large $t$.

\vspace{.1cm}

\noindent (b) \quad
If $g$ is not trivial, then we have $\Ii_{Z,Q}(a,b)\hookrightarrow\Oo_{Q}(0,1)$ and thus $a\leq 0$, $b\leq 1$. When $(a,b)\neq(0,1)$, \eqref{eqQH2} holds similarly as in (a). If $(a,b)=(0,1)$, then from \eqref{eqQH1} we get $|Z|\in \{0,1\}$, i.e. $\Ii_{Z,Q}(a,b)\cong\Oo_{Q}(0,1)$ or $\Ii_{p,Q}(0,1)$. In the former case \eqref{eqTQH1} would split, impossible.
%{\color{red}Suppose $\Ii_{Z,Q}(a,b)\cong\Oo_{Q}(0,1)$. Then we have an exact sequence
%\[
%0\to\Oo_{Q}(0,1)\to\Tt_{Q}(H;\PP^3)\to\Ii_{Z',Q}(1,0)\to 0
%\]
%with $|Z'|=1$. Since $\Ext^1(\Ii_{Z',Q}(1,0),\Oo_{Q}(0,1))\cong\mathrm{H}^1(\Ii_{Z',Q}^{\vee}(-1,1))\cong\mathrm{H}^1(\Ii_{Z,Q}(-1,-3))^{\vee}=0$, it does split, but $\Tt_{Q}(H;\PP^3)\ncong\Oo_{Q}(0,1)\dirsum\Ii_{Z',Q}(1,0)$.
%}
If $\Ii_{Z,Q}(0,1)\cong\Ii_{p,Q}(0,1)$, then we get $\mathrm{P}_{\Ff}(t) = t^2+3t+1$ as before and this satisfies \eqref{eqQH2}. Therefore, $\Tt_{Q}(H;\PP^3)$ is stable.
\end{proof}

\begin{remark}\label{tor123}
Note that, for two distinct hyperplanes $H$ and $H'$, we have $\Tt_Q(H; \PP^3) \not\cong \Tt_Q(H';\PP^3)$. Indeed, for $H$ such that $H\cap Q$ is singular with the vertex point $p$, we have $H=T_pQ$ the tangent space of $Q$ at $p$. The case when $H \cap Q$ is smooth, is dealt in \cite[Corollary 4.6]{BHM}.    
\end{remark}

\begin{corollary}\label{corr}\cite[Proposition 2.10]{huh2011}
The moduli space $\mathbf{M}_Q^{\Hh}((1,1),2)$ of stable sheaves of rank two with the Chern classes $c_1=(1,1)$ and $c_2=2$ on $Q$ with respect to $\Hh=\Oo_Q(1,1)$, is isomorphic to $(\PP^3)^*$.
\end{corollary}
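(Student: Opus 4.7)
The plan is to upgrade the assignment $\Phi:(\PP^3)^*\to\mathbf{M}_Q^{\Hh}((1,1),2)$, $[H]\mapsto[\Tt_Q(H;\PP^3)]$, to an isomorphism. By Proposition \ref{stabbb}, $\Phi$ is well-defined; it is a morphism because the sheaves $\Tt_Q(H;\PP^3)$ assemble into a flat family over $(\PP^3)^*$ coming from the universal gradient map, and it is injective on closed points by Remark \ref{tor123}.

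To identify the image, I would first show that the moduli space is smooth of dimension $3$ along the image of $\Phi$. Setting $\Ee=\Tt_Q(H;\PP^3)$, simplicity gives $\dim\Hom(\Ee,\Ee)=1$; Serre duality yields $\Ext^2(\Ee,\Ee)\cong\Hom(\Ee,\Ee\otimes K_Q)^{\vee}$, and since $\Ee$ and $\Ee\otimes K_Q$ are rank-two $\mu$-stable sheaves of slopes $1$ and $-3$ respectively, no nonzero homomorphism between them exists, hence $\ext^2(\Ee,\Ee)=0$. Hirzebruch--Riemann--Roch on $Q$ gives
\[
\chi(\Ee,\Ee)=c_1(\Ee)^2-4c_2(\Ee)+4\chi(\Oo_Q)=2-8+4=-2,
\]
so $\ext^1(\Ee,\Ee)=3=\dim(\PP^3)^*$. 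A proper injective morphism from a smooth, irreducible, three-dimensional variety into a scheme that is smooth of dimension three along its image is finite and bijective onto an irreducible component, and hence an isomorphism onto that component by normality. Thus $\Phi$ identifies $(\PP^3)^*$ with an irreducible component of $\mathbf{M}_Q^{\Hh}((1,1),2)$.

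The main obstacle is to show that $\mathbf{M}_Q^{\Hh}((1,1),2)$ consists of this single component, which upgrades $\Phi$ to an isomorphism of the whole moduli space. My preferred approach is to show that every stable $\Ee$ in the moduli space admits a presentation
\[
0\to\Ee\to\Oo_Q(1,0)\oplus\Oo_Q(0,1)\to\Oo_p\to 0,
\]
in parallel with \eqref{eqTQH3}: stability forces $\mathrm{h}^0(\Ee(-1,0))=\mathrm{h}^0(\Ee(0,-1))=0$, Serre duality plus stability gives $\mathrm{h}^2(\Ee)=0$, and then $\mathrm{h}^0(\Ee)=3$ by Riemann--Roch; a careful analysis of the evaluation map, in the spirit of the Beilinson resolution on $\PP^1\times\PP^1$, would then yield such a presentation, and the Poincar\'e-residue description of Proposition \ref{mainprop} reconstructs a unique hyperplane $H$ from the pair $(\Ee,p)$.
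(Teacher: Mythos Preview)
Your argument is correct in outline but differs substantially from the paper's in where it spends its effort. The paper's proof is almost entirely devoted to the step you pass over in one clause: it constructs an explicit universal sheaf $\Pp$ on $Q\times(\PP^3)^*$ from the ``universal gradient map'' and then carries out a careful Tor computation (involving the incidence locus $\Delta\subset\PP^3\times(\PP^3)^*$) to verify that the fibre over each $H$ really is $\Tt_Q(H;\PP^3)$; this is what certifies flatness and hence that $\Phi$ is a morphism. Having done this, the paper simply asserts that injectivity (Remark~\ref{tor123}) makes $\Phi$ an isomorphism, implicitly leaning on the cited result \cite[Proposition~2.10]{huh2011}. You take the opposite tack: you grant the flat family and instead supply the deformation-theoretic input ($\ext^2(\Ee,\Ee)=0$ and $\chi(\Ee,\Ee)=-2$, hence $\ext^1=3$) that pins down the image as a full three-dimensional component, and you sketch an independent surjectivity argument via a presentation of type~\eqref{eqTQH3}. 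Each approach buys something the other lacks: the paper's makes the universal family rigorous, yours makes the moduli identification self-contained. Two small corrections: in the singular case $\Ee$ is only $\mu$-\emph{semistable} (the inclusion $\Ii_{p,Q}(1,0)\hookrightarrow\Ee$ has slope equal to $\mu(\Ee)=1$), though your vanishing of $\Hom(\Ee,\Ee\otimes K_Q)$ survives because the slopes $1$ and $-3$ still differ; and your claim $\mathrm{h}^0(\Ee)=3$ requires $\mathrm{h}^1(\Ee)=0$, which you have not established and which your Beilinson sketch would need.
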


\begin{proof}
It suffices to show the existence of the sheaf $\Pp$ on $Q\times (\PP^3)^*$ such that 
\begin{equation}\label{fibre}
\Pp_{~|Q\times \{H\}} \cong \Tt_Q(H;\PP^3)
\end{equation}
for each $H\in (\PP^3)^*$, which would induce a morphism $\Psi : (\PP^3)^* \rightarrow \mathbf{M}_Q^{\Hh}((1,1),2)$ by Proposition \ref{stabbb}. Then it is automatically an isomorphism due to Remark \ref{tor123}. 

Set $\PP:=\PP^3\times (\PP^3)^*$ and $\widetilde{H}\subset \PP$ the universal divisor for the hyperplanes in $\PP^3$. Then we have $\widetilde{Q}:=Q\times (\PP^3)^* \in |\Oo_{\PP}(2,0)|$ and $\widetilde{H}\in |\Oo_{\PP}(1,1)|$. Let $\widetilde{F}$ and $\widetilde{G}$ be the defining equations for $\widetilde{Q}$ and $\widetilde{H}$, respectively. Then we have an exact sequence
\[
0\to \widetilde{\Tt} \to \Oo_{\PP}(1,0)^{\oplus 4} \stackrel{\nabla_1}{\longrightarrow} \Oo_{\PP}(2,0)\oplus \Oo_{\PP}(1,1)
\]
for some torsion-free coherent sheaf $\widetilde{\Tt}:=\mathrm{ker}(\nabla_1)$ of rank two on $\PP$, where the map $\nabla_1=\left(\nabla_1(\widetilde{F}), \nabla_1(\widetilde{G})\right)$ is defined to be the gradients of $\widetilde{F}$ and $\widetilde{G}$ with respect to the coordinates of $\PP^3$. Set $\Aa:=\mathrm{Im}(\nabla_1)$ and define 
\[
\Pp:=\bigslant{\widetilde{\Tt}\otimes \Oo_{\widetilde{Q}}}{\mathcal{T}\!\mathrm{or}^1_{\PP}\left(\Aa, \Oo_{\widetilde{Q}}\right)}.
\]
Then the sheaf $\Pp$ is a torsion-free, coherent sheaf of rank two on $\widetilde{Q}$, fitting into an exact sequence
\begin{equation}\label{eqaaa24}
0\to \Pp \to \Oo_{\widetilde{Q}}((1,1),0)^{\oplus 4} \to \Aa \otimes \Oo_{\widetilde{Q}} \to 0.
\end{equation}
For a fixed hyperplane $H\subset \PP^3$, tensor (\ref{eqaaa24}) by $\Oo_{Q\times \{H\}}$ with the identification $Q\times \{H\} \cong Q$ to have
\begin{equation}\label{univeq}
0\to \mathcal{T}\!\mathrm{or}^1_{\PP}\left( \Aa\otimes \Oo_{\widetilde{Q}}, \Oo_{Q\times \{H\}} \right) \to  \Pp_{~|Q\times \{H\}}\to \Oo_Q(1,1)^{\oplus 4} \to \Aa\otimes \Oo_{Q\times \{H\}} \to 0.
\end{equation}
For the sheaf $\Aa_{Q \cap H}$ in (\ref{fe1}), we have $\Aa\otimes \Oo_{Q \times \{H\}}\cong \Aa_{Q \cap H} \otimes \Oo_Q$. Thus it remains to show the vanishing of $\mathcal{T}\!\mathrm{or}^1_{\PP}\left( \Aa\otimes \Oo_{\widetilde{Q}}, \Oo_{Q\times \{H\}} \right)$, because the sequence (\ref{univeq}) would then be identical to (\ref{fe1}) and we can obtain the isomorphism (\ref{fibre}). From the exact sequence
\[
0\to \Aa \stackrel{\iota}{\longrightarrow} \Oo_{\PP}(2,0)\oplus \Oo_{\PP}(1,1) \to \Bb \to 0
\]
with $\Bb:=\mathrm{coker}(\iota)$, we get an exact sequence 
%\begin{equation}\label{eeeqq23}
%0\to \mathcal{T}\!\mathrm{or}^1_{\PP}\left( \Bb, \Oo_{\widetilde{Q}}\right) \to \Aa \otimes \Oo_{\widetilde{Q}} \to \Oo_{\widetilde{Q}}((2,2),0)\oplus \Oo_{\widetilde{Q}}((1,1), 1) \to \Bb \otimes \Oo_{\widetilde{Q}} \to 0,
%\end{equation}

\begin{equation}\label{eeeqq23}
        \begin{tikzcd}[cramped,
  row sep=small, column sep=small,
  ar symbol/.style = {draw=none,"\textstyle#1" description,sloped},
  isomorphic/.style = {ar symbol={\cong}},
  ]
 0 \ar[r] & \mathcal{T}\!\mathrm{or}^1_{\PP}\left( \Bb, \Oo_{\widetilde{Q}}\right)\ar[r] &\Aa \otimes \Oo_{\widetilde{Q}} \ar[dr,twoheadrightarrow]\ar[rr, "f"]&& \Oo_{\widetilde{Q}}((2,2),0)\oplus \Oo_{\widetilde{Q}}((1,1), 1) \ar[r] &\Bb \otimes \Oo_{\widetilde{Q}} \ar[r]& 0 \\
          & & & \Mm\ar[ur,hookrightarrow] & & &
\end{tikzcd}      
\end{equation}
\noindent by tensoring with $\Oo_{\widetilde{Q}}$. We set $\Mm:=\mathrm{Im}(f)$. Indeed, the composition $\Aa \rightarrow \Oo_{\PP}(2,0)\oplus \Oo_{\PP}(1,1) \rightarrow \Oo_{\PP}(1,1)$ is surjecitve and so one can see that $\Aa$ fits into an exact sequence
\[
0\to \Ii_{\Delta, \PP}(2,0) \to \Aa \to \Oo_{\PP}(1,1) \to 0
\]
for a subscheme $\Delta\subset \PP$ given as the degeneracy locus of $\nabla_1$, and we get $\Bb \cong \Oo_\Delta(2,0)$. For example, if $\widetilde{F}=\sum_{i=0}^3 x_i^2$ and $\widetilde{G}=\sum_{i=0}^3 x_iy_i$, then $\Delta$ is the diagonal in $\PP$. Now from a commutative diagram
\begin{equation}
\begin{tikzcd}[
  row sep=normal, column sep=normal,
  ar symbol/.style = {draw=none,"\textstyle#1" description,sloped},
  isomorphic/.style = {ar symbol={\cong}},
  ]
  & 0\ar[d] & 0 \ar[d] &  \\
  &\mathcal{T}\!\mathrm{or}^1_{\PP}\left(\Oo_{\Delta}(2,0),\Oo_{\widetilde{Q}}\right) \ar[r,isomorphic]\ar[d] &\mathcal{T}\!\mathrm{or}^1_{\PP}\left(\Oo_{\Delta}(2,0),\Oo_{\widetilde{Q}}\right) \ar[d] \\
  0 \ar[r] & \Ii_{\Delta,\PP}(2,0)\tensor\Oo_{\widetilde{Q}} \ar[d]\ar[r] & \Aa\tensor\Oo_{\widetilde{Q}} \ar[d]\ar[r] & \Oo_{\widetilde{Q}}(1,1) \ar[d,isomorphic] \ar[r] & 0  \\
  0 \ar[r] & \Ii_{\Delta\cap\widetilde{Q},\widetilde{Q}}(2,0)\ar[d]\ar[r] & \Mm \ar[d] \ar[r] & \Oo_{\widetilde{Q}}(1,1) \ar[r] & 0 \\
 & 0 & 0 & 
\end{tikzcd}
\end{equation}
we have
\begin{equation*}%\label{vanTorqh}
\mathcal{T}\!\mathrm{or}^i_{\widetilde{Q}}\left(\Mm,\Oo_{Q\times\{H\}}\right)\cong\mathcal{T}\!\mathrm{or}^i_{\widetilde{Q}}\left(\Ii_{\Delta\cap\widetilde{Q},\widetilde{Q}}(2,0),\Oo_{Q\times\{H\}}\right)\cong\mathcal{T}\!\mathrm{or}^{i+1}_{\widetilde{Q}}\left(\Oo_{\Delta\cap\widetilde{Q},\widetilde{Q}}(2,0),\Oo_{Q\times\{H\}}\right),
\end{equation*}
which vanishes for all $i\geq 1$ by \cite[Theorem 4 at page 110]{Serre}, because we have
\begin{equation*}
\left(\Delta\cap\widetilde{Q}\right)\cap \left(Q\times\{H\}\right)=\left\{
    \begin{array}{cc}
        \{(p,H)\} & \text{if $H=T_{p}H$}\\
        \emptyset & \text{otherwise.}
    \end{array}
    \right.
\end{equation*}
This implies that
\[
\mathcal{T}\!\mathrm{or}^1_{\widetilde{Q}}\left(\Aa\otimes\Oo_{\widetilde{Q}},\Oo_{Q\times\{H\}}\right)\cong\mathcal{T}\!\mathrm{or}^1_{\widetilde{Q}}\left(\mathcal{T}\!\mathrm{or}^1_{\PP}\left(\Oo_{\Delta}(2,0),\Oo_{\widetilde{Q}}\right),\Oo_{Q\times\{H\}}\right)=0,
\]
%Now by tensoring (\ref{eeeqq23}) by $\Oo_{Q\times \{H\}}$, one gets
%\[
%\mathcal{T}\!\mathrm{or}^1_{\PP}\left( \Aa\otimes \Oo_{\widetilde{Q}}, \Oo_{Q \times \{H\}} \right) \cong \mathcal{T}\!\mathrm{or}^2_{\PP}\left( \Bb, \Oo_{Q \times \{H\}}\right)\cong \mathcal{T}\!\mathrm{or}^2_{\PP}\left( \Oo_B(2,0), \Oo_{Q \times \{H\}}\right)
%\]
because $\mathcal{T}\!\mathrm{or}^1_{\PP}\left(\Oo_{\Delta}(2,0),\Oo_{\widetilde{Q}}\right)$ supports $\Delta\cap\widetilde{Q}$, which intersects $Q\times\{H\}$ transversely.
%by \cite[Theorem 4 at page 110]{Serre}. 
\end{proof}

%\[
%\mathcal{T}\!\mathrm{or}^1_{\PP}\left( \Aa, \Oo_{Q\times \{H\}} \right)\cong \mathcal{T}\!\mathrm{or}^2_{\PP}\left( \Bb, \Oo_{Q\times \{H\}} \right).
%\]
%{\color{red}+++\\
%Another idea:\\
%construct a commutative diagram
%\begin{equation*}
%\begin{tikzcd}[
%  row sep=normal, column sep=normal,
%  ar symbol/.style = {draw=none,"\textstyle#1" description,sloped},
%  isomorphic/.style = {ar symbol={\cong}},
%  ]
%         &    0\ar[d]                    & 0 \ar[d] & 0\ar[d]\\
% 0\ar[r] & \Kk\cong\Ii_{Q}\Pp\ar[d]\ar[r]      & \Ii_{Q}((1,1),0)^{\dirsum\;4}\ar[d]\ar[r] & \Aa\tensor\Ii_{Q} \ar[d]\ar[r] &  0  \\
% 0\ar[r] & \Pp \ar[d]\ar[r] & \Oo_{\widetilde{Q}}((1,1),0)^{\dirsum\;4}\ar[d]\ar[r]&\Aa\tensor\Oo_{\widetilde{Q}} \ar[r]\ar[d] & 0\\
%    0\ar[r]     & \Tt_{Q}(H;\PP^3) \ar[d]\ar[r]   &  \Oo_{Q}(1,1)^{\dirsum\;4} \ar[r]\ar[d] & \Aa\tensor\Oo_{Q}\ar[r]\ar[d]  & 0 \\
%         &  0                 &    0 &  0.&
%\end{tikzcd}
%\end{equation*}
%and show this implies $(\Pp)_{|Q}\cong\Tt_{Q}(H;\PP^3)$.
%}

\begin{remark}\label{example1}
On the contrary to example \ref{exQ}, one can restrict the sheaf $\Tt_{Q \cap H, \PP^3}$ to $H$, instead of $Q$, to form $\Tt_H(Q; \PP^3)$. In case when $Q \cap H$ is a smooth conic, then we have
$\Tt_H(Q; \PP^3)\cong \Tt_H(-\log Q \cap H)\cong \Tt_H(-1)$; see \cite{An2014log}. Assume that $Q \cap H$ is a singular conic with the vertex $p$. Then from $\eqref{diagQH}$ we have an exact sequence
\[
0\to\Tt_{H}(Q;\PP^3)\to\Oo_{H}(1)^{\dirsum{3}}\to\Ii_{p,\PP^3}(2)\tensor\Oo_{H}\to 0
\]
and then one can construct a commutative diagram
\begin{equation*}
\begin{tikzcd}[
  row sep=normal, column sep=normal,
  ar symbol/.style = {draw=none,"\textstyle#1" description,sloped},
  isomorphic/.style = {ar symbol={\cong}},
  ]
         &    0\ar[d]                    & 0 \ar[d] &&& 0\ar[d]\\
 0\ar[r] & \Ii_{p,H}(1)\ar[d]\ar[r]      & \Oo_{H}(1)\ar[d]\ar[rrr] &&& \Oo_{p} \ar[d]\ar[r] &  0  \\
 0\ar[r] & \Tt_{H}(Q;\PP^3) \ar[d]\ar[r] & \Oo_{H}(1)^{\dirsum\;3}\ar[d]\ar[rrr, "(x_3\;\;-x_2\;\;-x_1)\tensor \textbf{1}_{H}"] &&& \Ii_{p,\PP^3}(2)\tensor\Oo_{H} \ar[r]\ar[d] & 0\\
    0\ar[r]     & \Oo_{H} \ar[d]\ar[r]   &  \Oo_{H}(1)^{\dirsum\;2} \ar[rrr,"(-x_2\;\; -x_1)"]\ar[d] &&& \Ii_{p,H}(2)\ar[r]\ar[d]  & 0 \\
         &  0                 &    0 &&&  0.&
\end{tikzcd}
\end{equation*}
The first vertical sequence of the diagram with the fact that $\Ext^1_{\PP^2}(\Ii_{p,\PP^2}(1),\Oo_{\PP^2})=0$ implies that
\[
\Tt_{H}(Q;\PP^3)\cong \Ii_{p,H}(1)\dirsum\Oo_H\cong\Ii_{p,\PP^2}(1)\dirsum\Oo_{\PP^2}.
\]
Indeed, we know from \cite{dolgachev1993arrangements} that $\Tt_{H}(-\log Q\cap H)\cong \Oo_{H}(1)\oplus\Oo_H$ and so we can obtain an exact sequence
\[
 0\to\Tt_{H}(Q;\PP^3)\to\Tt_{H}(-\log Q\cap H)\to\Oo_{p}\cong\mathcal{T}\!\mathrm{or}^1_{\PP^3}(\Oo_{p},\Oo_X)\to 0,
\]
which is the sequence (\ref{mm2}). 
\end{remark}

%%%%%%%%%%%%%%%%%%%%%%%%%%%
%\section{Multi-projective spaces}
%Let $\PP^{(n_1,\cdots,n_k)}:=\PP^{n_1}\times\cdots\times\PP^{n_k}$ with $n_i>0$ be the $k$ product of projective spaces. We will extend the definition of the logarithmic tangent sheaf on complete intersection in $\PP^{N}$ to a multi-projective space $\PP^{(n_1,\cdots,n_k)}$.

%%%%%%%%%%%%%%%%%%%%%%%%%%%%

\section{Cubic surface}
In this section, we assume that $X=S=\VV(F)$ is a smooth cubic surface with the blow-down map $\pi : S \rightarrow \PP^2$. Recall that $Z=\{p_1, \dots, p_6\}\subset \PP^2$ is the set of blown-up points in general position with the exceptional divisors $E_i:=\pi^{-1}(p_i)$. We set $L$ a divisor class in $\pi^*\Oo_{\PP^2}(1)$ and $\Hh:=\Oo_S(1)\cong \Oo_S(-K_S)$. For a coherent sheaf $\Ee$ on $S$ of rank $r$ with Chern classes $c(\Ee)=(c_1(\Ee), c_2(\Ee))\in \ZZ^{\oplus 7}\oplus \ZZ$, we have
\[
\chi(\Ee(t))=\left(\frac{3r}{2}\right) t^2+\left( \frac{3r+2c_1(\Ee)\cdot\Hh}{2} \right) t + \left (r+\frac{c_1(\Ee){\cdot} \Hh}{2}+\frac{c_1(\Ee)^2-2c_2(\Ee)}{2} \right)
\]
with respect to the ample line bundle $\Hh$. In particular, for $r=2$ and $c_1(\Ee)=0$, we get $\chi(\Ee(t))=3t^2+3t+2-c_2(\Ee)$. 

\begin{proposition}\label{tor}
    For any two distinct smooth hyperplane sections $D_1$ and $D_2$ in $|\Oo_S(1)|$, we have
    \[
    \Tt_S(-\log D_1) \ncong \Tt_S(-\log D_2).
    \]
\end{proposition}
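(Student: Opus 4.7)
The plan is to recover the hyperplane $H$ (and hence $D = S \cap H$) canonically from the isomorphism class of $\Omega^1_S(\log D)$, which is the dual of $\Tt_S(-\log D)$. Suppose $\Tt_S(-\log D_1) \cong \Tt_S(-\log D_2)$ and set $\Ee := \Omega^1_S(\log D_1) \cong \Omega^1_S(\log D_2)$, $D_i = S \cap H_i$. Since each $D_i$ is smooth, \eqref{XHeq} (with $X = S$, $d = 3$, $N = 3$) gives a canonical resolution
\[
0 \to \Oo_S(-3) \to \Oo_S(-1)^{\dirsum 3} \to \Omega^1_S(\log D_i) \to 0.
\]
Twisting by $\Oo_S(1)$ and taking cohomology (using standard vanishings for $\Oo_S(k)$ on the cubic surface) yields $h^0(\Ee(1)) = 3$, global generation of $\Ee(1)$, and an identification $\ker(\mathrm{ev}_\Ee) \cong \Oo_S(-3)$, where $\mathrm{ev}_\Ee \colon H^0(\Ee(1)) \tensor \Oo_S(-1) \to \Ee$ is the evaluation map.

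The key step is to extract from the isomorphism class of $\Ee$ a canonical $3$-dimensional subspace $V_\Ee \subset H^0(\Oo_S(2))$. The inclusion $\ker(\mathrm{ev}_\Ee) \cong \Oo_S(-3) \hookrightarrow H^0(\Ee(1)) \tensor \Oo_S(-1)$, after twisting by $\Oo_S(3)$ and taking global sections, produces a linear map $\Phi_\Ee \colon H^0(\Ee(1))^\vee \to H^0(\Oo_S(2))$; set $V_\Ee := \mathrm{Im}(\Phi_\Ee)$. The ambiguities are a rescaling of the isomorphism $\ker(\mathrm{ev}_\Ee) \cong \Oo_S(-3)$ (which scales $\Phi_\Ee$) and a change of basis on $H^0(\Ee(1))$ (which precomposes $\Phi_\Ee$ by an automorphism of the source); neither action alters the image, so $V_\Ee$ depends only on the isomorphism class of $\Ee$.

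For $\Ee = \Omega^1_S(\log D)$ with $H = \VV(G)$, $G = \sum_{i=0}^3 a_i x_i$, tracing through the construction of \eqref{XHeq} (completing $G$ to coordinates $y_0, y_1, y_2, y_3 = G$) and applying the chain rule identifies
\[
V_\Ee = \Bigl\{ \textstyle\sum_{i=0}^3 b_i F_i|_S \;:\; \sum_{i=0}^3 a_i b_i = 0 \Bigr\} \;\subset\; J,
\]
where $F_i := \partial F/\partial x_i$ and $J := \mathrm{span}(F_0|_S, \ldots, F_3|_S) \subset H^0(\Oo_S(2))$. Smoothness of $S$ forces $F_0, \ldots, F_3$ to be linearly independent (no nontrivial constant linear combination lies in $(F)$), so $\dim J = 4$, and the assignment $H \mapsto V_\Ee$ is a bijection between $(\PP^3)^{*}$ and the set of $3$-dimensional subspaces of $J$. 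Consequently $V_\Ee$ recovers $H$, so the isomorphism $\Omega^1_S(\log D_1) \cong \Omega^1_S(\log D_2)$ forces $H_1 = H_2$ and hence $D_1 = D_2$. The main subtlety is checking that the intrinsic $V_\Ee$ computed from the abstract isomorphism class coincides with the geometric subspace read off from the specific resolution of \eqref{XHeq}; this reduces to identifying the canonical evaluation map with the surjection in the resolution, which is guaranteed by $h^0(\Ee(1)) = 3$.
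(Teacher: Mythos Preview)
Your proof is correct and takes a genuinely different route from the paper's. The paper fixes a blow-down $\pi\colon S\to\PP^2$ and restricts $\Tt_S(-\log D)$ to the rational curves $C\in|\Oo_S(L)|$ (pull-backs of lines); by \cite[Lemma 2.2]{HMPV} the splitting type on $C$ jumps to $\Oo_C(-1)\oplus\Oo_C(1)$ precisely when the corresponding line in $\PP^2$ is an inflection tangent of the plane cubic $\pi(D)$, so the bundle pins down the nine inflection points of $\pi(D)$ and, the paper claims, hence $\pi(D)$ itself. Your argument instead stays with the embedding $S\subset\PP^3$: from the canonical evaluation resolution $0\to\Oo_S(-3)\to H^0(\Ee(1))\otimes\Oo_S(-1)\to\Ee\to 0$ you extract the $3$-dimensional image $V_\Ee\subset H^0(\Oo_S(2))$, identify it with the hyperplane $\{\sum b_iF_i|_S:\sum a_ib_i=0\}$ inside the $4$-dimensional Jacobian space $J=\langle F_0|_S,\dots,F_3|_S\rangle$, and read off $H=\VV(\sum a_ix_i)$ by projective duality.

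What each approach buys: the paper's argument is geometric and links the Torelli question to the classical theory of plane cubics, but it depends on a choice of blow-down and on a delicate step---nine inflection points determine only a Hesse \emph{pencil} of cubics, not a single one, so one implicitly needs the extra condition that $\pi(D)$ passes through the six blown-up points to isolate it. Your argument avoids this issue entirely: it is intrinsic to $S\subset\PP^3$, uses no auxiliary classical fact, and recovers the hyperplane $H$ directly. It is also very much in the spirit of the later Proposition~\ref{char}, which exploits the same resolution \eqref{ext99934} to characterize these bundles inside $\mathbf{M}_S^{\Hh}(0,9)$; your observation is essentially that the resolution remembers not just the \emph{shape} of the bundle but the specific hyperplane, via the position of $V_\Ee$ inside $J$.
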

\begin{proof}
Let $D$ be a fixed curve in the family of smooth hyperplane sections and pick a curve $C\in|\Oo_{S}(L)|$. Since $D$ is a curve from $|\Oo_{S}(3L-\sum_{i=1}^{6}E_i)|$, we get $D.C=3$ with the following three cases:
\[
\text{(a)}~ |C\cap D|_{\text{red}}=3,\quad \text{(b)}~ |C\cap D|_{\text{red}}=2,\quad \text{(c)}~ |C\cap D|_{\text{red}}=1.
\]
Again by \cite[Lemma 2.2]{HMPV}, we have
\[
\left(\Tt_S(-\log D)\right)_{|C}\cong
\begin{cases}
    \Oo_{C}\oplus\Oo_{C} & \text{for (a) or (b)}\\
    \Oo_{C}(-1)\oplus\Oo_{C}(1) & \text{for (c)},
\end{cases}
\]
from which we can find the nine inflection points of the cubic curve $\pi(D)\subset \PP^2$. It is well-known classically that a smooth cubic curve can be uniquely determined by the given nine inflection points. Therefore, the assertion follows immediately.
\end{proof}

\begin{corollary}\label{cinj1}
Let $\mathbf{M}_S^{\Hh}(c_1,c_2)$ be the moduli space of semistable sheaves of rank two on $S$ with Chern classes $(c_1, c_2)$ with respect to the polarization $\Hh$. Then the rational map 
\[
\Phi : \PP \mathrm{H}^0(\Oo_{\PP^3}(1)) \cong \PP\mathrm{H}^0(\Oo_S(-K_S))\cong\PP^3\dashrightarrow\mathbf{M}_S^{\Hh}(0,9),
\]
defined by sending $H\in |\Oo_{\PP^3}(1)|$ to the corresponding net logarithmic tangent sheaf $\Tt_S(H; \PP^3)$, is generically injective.
\end{corollary}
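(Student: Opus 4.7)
The plan is to combine three ingredients already established: the $\mu$-stability of $\Tt_S(H;\PP^3)$ for smooth hyperplane sections (Proposition \ref{stab}), the identification $\Tt_S(H;\PP^3)^{\vee\vee} \cong \Tt_S(-\log D)$ (Proposition \ref{mainprop}), and the injectivity statement for log-tangent bundles under varying hyperplane sections (Proposition \ref{tor}).

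First I would restrict to the dense open locus $U \subset (\PP^3)^{*}$ of hyperplanes $H$ with $D := S \cap H$ smooth, which is nonempty by Bertini. On $U$, Proposition \ref{stab} applies to the cubic surface $S$ --- the `very general' hypothesis there is required only for $d\ge 4$ --- and shows that $\Tt_S(H;\PP^3)$ is $\mu$-stable of rank two with respect to $\Hh$. A short Chern class computation via the Poincar\'e-type sequence of Remark \ref{rem1},
\[
0 \to \Tt_S(H;\PP^3) \to \Tt_S \to \Oo_D(D) \to 0,
\]
using $c(\Tt_S) = 1 - K_S + 9\, [\mathrm{pt}]$ and $[D] = -K_S$ on the cubic surface, yields $c_1 = 0$ and $c_2 = 9$. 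Hence $\Phi$ is a well-defined morphism $U \to \mathbf{M}_S^{\Hh}(0, 9)$.

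For generic injectivity, suppose $\Phi(H_1) = \Phi(H_2)$ with $H_1, H_2 \in U$. Taking double duals and applying Proposition \ref{mainprop} yields
\[
\Tt_S(-\log D_1) \cong \Tt_S(H_1;\PP^3)^{\vee\vee} \cong \Tt_S(H_2;\PP^3)^{\vee\vee} \cong \Tt_S(-\log D_2),
\]
where $D_i = S \cap H_i$. By Proposition \ref{tor}, this forces $D_1 = D_2$ as effective divisors in $|\Oo_S(1)|$. Since each smooth plane cubic $D_i$ is nondegenerate in the unique plane of $\PP^3$ it spans, we conclude $H_1 = H_2$, establishing generic injectivity.

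The main obstacle, modulo these cited results, is the Chern class bookkeeping showing $\Phi(U) \subset \mathbf{M}_S^{\Hh}(0,9)$; once the map is confirmed to land in the prescribed moduli component, the injectivity is a direct chain of isomorphisms and applications of Propositions \ref{mainprop} and \ref{tor}.
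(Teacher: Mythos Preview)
Your argument is correct and follows essentially the same route as the paper, which simply states that the corollary follows from Propositions~\ref{stab} and~\ref{tor}; you have merely filled in the details (well-definedness on the smooth locus, the Chern class check, and the passage from $D_1=D_2$ to $H_1=H_2$). One minor simplification: on the open set $U$ where $D=S\cap H$ is smooth, the cokernel $\Qq$ in Proposition~\ref{mainprop} vanishes, so $\Tt_S(H;\PP^3)\cong\Tt_S(-\log D)$ directly and the double-dual step is unnecessary.
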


\begin{proof}
The assertion follows automatically from Propositions \ref{stab} and \ref{tor}. 
\end{proof}

\noindent Let $H=\VV(G)\subset \PP^3$ be a hyperplane, and then we get an exact sequence
\[
0 \to \Tt_{S\cap H, \PP^3} \to \Oo_{\PP^3}(1)^{\oplus{4}} \stackrel{\xi_{S\cap H}}{\barrow} \Oo_{\PP^3}(3) \oplus \Oo_{\PP^3}(1) \to \Aa_{S \cap H} \to 0.
\]
As in Example \ref{example1}, we have $\Aa \cong \Kk\oplus \Oo_{\PP^3}(1)$ and so we get an exact sequence
\begin{equation}\label{eeeq1}
0 \to \Tt_{S\cap H, \PP^3} \to \Oo_{\PP^3}(1)^{\oplus{3}} \stackrel{\overline{\xi}_{S\cap H}}{\barrow} \Kk \to 0.
\end{equation}
Let $\CC[x_0, x_1, x_2, x_3]$ be the coordinate ring of $\PP^3$. By changing of variable, we may assume that $H=V(x_3)$ and 
\[
\overline{\xi}:=\overline{\xi}_{S\cap H}=\begin{pmatrix} \frac{\partial F}{\partial x_0} & \frac{\partial F}{\partial x_1} & \frac{\partial F}{\partial x_2} \end{pmatrix}.
\]
Then we have $\Kk \cong \Ii_{R_0, \PP^3}(3)$ with $R_0$ the degeneracy locus of $\overline{\xi}$, i.e. $R_0=\VV \left(\frac{\partial F}{\partial x_0}, \frac{\partial F}{\partial x_1}, \frac{\partial F}{\partial x_2} \right)$. Since $S$ is smooth, the subscheme $R_0$ is $0$-dimensional with length $l (R_0)=8$. Now, tensoring (\ref{eeeq1}) with $\Oo_S$, one gets 
\begin{equation}\label{eeqq09}
0\to \Tt_S(H; \PP^3) \to \Oo_S(1)^{\oplus 3} \to \Ii_{R_0, \PP^3}(3)\otimes \Oo_S \to 0.
\end{equation}
For the scheme-theoretic intersection $R:=R_0 \cap S$, we get an extension 
\[
0\to \Oo_R \to \Ii_{R_0, \PP^3}(3)\otimes \Oo_S \to \Ii_{R, S}(3) \to 0, 
\]
from which we obtain (\ref{mm2}):
\[
0\to \Tt_S(H; \PP^3) \to \Tt_S(-\log D) \to \Oo_{R} \to 0,
\]
with $D=S \cap H$. 

\begin{remark}
For $\Ee:=\Tt_S(H; \PP^3)$ we get $(c_1, c_2)=(0,9)$ from (\ref{eeqq09}) for any $H\in (\PP^3)^*$ and so $\mathrm{P}_{\Ee}(t)=\frac{3}{2}t(t+1)-\frac{7}{2}$. In particular, the stability and semistability of $\Ee$ coincide.    
\end{remark}

\noindent For a hyperplane $H\subset \PP^3$ with $D=S \cap H$ singular, one can notice that $D$ is reduced. Thus we have the following $6$ types for $D\subset H$:
\begin{itemize}
\item[(a)] a nodal cubic curve with the nodal point $q$; 
\item [(b)] a cuspidal cubic curve with the cusp $q$;
\item [(c1)] $C'+L'$ for a smooth conic $C'$ and a line $L'$ with the two intersection points $q_1\ne q_2$;
\item [(c2)] $C'+L'$ for a smooth conic $C'$ and a tangent line $L'$ with a tangent point $q$. 
\item [(d1)] three distinct lines $L_1+L_2+L_3$ with three vertices $\{p_{12}, p_{13}, p_{23}\}$, $p_{ij}=L_i \cap L_j$;
\item [(d2)] three distinct lines $L_1+L_2+L_3$ with the triple point $q$. 
\end{itemize}
Setting $\{q_1, \dots, q_t\}\subset D$ the support of $R$ with multiplicity $\mu_j$ such that $\mu_1 \geq \dots \geq \mu_t$, one can consider $\mu(R):=(\mu_1, \dots, \mu_t) \in \NN^{\oplus t}$ to encode the information about the singularity of $D$. Then we have the following table. 
\vspace{.2cm}
\begin{center}
\begin{tabular}{|c||c|c|c|c|c|c|}
\hline
type & (a) & (b) & (c1) & (c2) & (d1) & (d2) \\ \hline
$\mu(R)$ & $(1)$ & $(2)$ & $(1,1)$ & $ (3)$ & $(1,1,1)$ & $(4)$ \\ \hline 
\end{tabular}
\end{center}
\vspace{.2cm}
Indeed, $\mu_j=\dim_{\CC} \left(\CC [x,y]\big/ (p_X, p_y) \right)$, where $p(x,y)$ is the local equation of $D$ at $q_j$ with $p_x$ and $p_y$ its partial derivatives. The number $\mu_j$ is called the {\it Milnor number} of $D$ at $q_j$; see \cite{Ri}. In particular, each type in the table is distinguished from the others by its Milnor numbers. 

\begin{proposition}\label{stabslope}
For any hyperplane $H\in (\PP^3)^*$ and $D=S \cap H$, the logarithmic tangent vector bundle $\Tt_S(-\log D)$ is $\mu$-stable.
\end{proposition}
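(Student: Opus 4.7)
The plan is to mirror the proof of Proposition \ref{stab} in the cubic case, adjusting only where the argument interacts with the singularities of $D$. Suppose for contradiction that $\Ll = \Oo_S(aL + \sum_{i=1}^6 b_i E_i) \hookrightarrow \Tt_S(-\log D)$ is a destabilizing line subsheaf, so $3a + \sum_i b_i \ge 0$. The strategy is to restrict $\Ll$ to judiciously chosen test-curves on $S$ in order to force $\Ll \cong \Oo_S(-2kL + k\sum_i E_i)$ with $k \in \{0,1\}$, and then to rule out both possibilities, exactly as in the smooth-$D$ case.

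The key observation making the restriction computations from the smooth-$D$ proof survive is that every line on $S$ automatically avoids $\mathrm{Sing}(D)$: any line $L' \in \{E_i, L_{ij}, \widehat{L}_i\}$ satisfies $L' \cdot D = 1$, while by the table preceding the statement $D$ has multiplicity at least $2$ at each of its singular points. Hence a line cannot meet $\mathrm{Sing}(D)$, and the local log-tangent data of $\Tt_S(-\log D)$ along each such line is indistinguishable from the smooth-$D$ situation; in particular $\Tt_S(-\log D)|_{L_{ij}} \cong \Oo_{L_{ij}}(-1) \oplus \Oo_{L_{ij}}(1)$ and similarly for $\widehat{L}_i$, just as in \cite[Lemma 2.2]{HMPV}. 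For the positive-dimensional linear systems $|2L - \sum_{i \in I}E_i|$ and $|L|$ used in Proposition \ref{stab}, a Bertini argument together with the finiteness of $\mathrm{Sing}(D)$ provides a generic member avoiding $\mathrm{Sing}(D)$ and meeting $D$ transversely at smooth points, so the diagram \eqref{key} and the resulting inequalities go through unchanged. The three chains of restrictions then force $\Ll \cong \Oo_S(-2kL + k\sum_i E_i)$ with $k \in \{0,1\}$ as before.

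The case $k=0$ would give $\mathrm{h}^0(\Tt_S(-\log D)) > 0$, hence $\mathrm{h}^0(\Tt_S) > 0$, contradicting \cite[Corollary 3.5]{huybrechts2023cubic}. For $k=1$ the argument is identical to Proposition \ref{stab}, and the one spot where irreducibility of $D$ was previously invoked---to conclude $D' = D$ from the diagram \eqref{stabdiag}---still works: one has $\Ll \otimes \Oo_{D'}(D') \hookrightarrow \Oo_D(D)$, so $D' \le D$ as effective divisors, and $\Ll(D') \cong \Oo_S(L)$ forces $D' \sim D$; as $D - D'$ is then both effective and linearly trivial, it vanishes, giving $D' = D$ whether $D$ is irreducible or one of the reduced reducible types (c1)--(d2). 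Thus either $\Oo_S(L) \hookrightarrow \Tt_S$ contradicts the $\mu$-stability of $\Tt_S$ \cite{Fa}, or $\Ll$ is already a line subbundle of $\Tt_S$, in which case the Eckart-point count from the 15 lines $L_{ij}$ yields $|Z''| \ge 10$ versus $|Z''| = 7$. The point demanding care, rather than a genuine obstacle, is verifying uniformly across the six singularity types that no restriction is spoilt by $\mathrm{Sing}(D)$; this reduces entirely to the intersection-number observation that all $27$ lines on $S$ miss $\mathrm{Sing}(D)$, together with generic-member arguments for the positive-dimensional linear systems.
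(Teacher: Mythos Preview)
Your proposal has two genuine gaps, both arising precisely when $D$ is reducible.

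\textbf{Lines contained in $D$.} Your key observation that every line on $S$ avoids $\mathrm{Sing}(D)$ is false. The intersection-number argument $L'\cdot D=1$ only shows $L'\cap\mathrm{Sing}(D)=\emptyset$ when $L'\not\subset D$; it says nothing when $L'$ is a component of $D$. In types (c1)--(d2) the line components of $D$ are among the $27$ lines, and they certainly pass through $\mathrm{Sing}(D)$ (e.g.\ in (d1) each $L_i$ contains two of the three vertices). So the claim that the restriction of $\Tt_S(-\log D)$ to $L_{ij}$ is ``indistinguishable from the smooth-$D$ situation'' is wrong for those $L_{ij}\subset D$. The paper treats this case separately: tensoring the residue sequence by $\Oo_{L_{ij}}$ and using $(\Tt_S)|_{L_{ij}}\cong\Oo_{L_{ij}}(-1)\oplus\Oo_{L_{ij}}(2)$ yields $(\Tt_S(-\log D))|_{L_{ij}}\cong\Oo_{L_{ij}}^{\oplus 2}$, which still gives $a+b_i+b_j\le 0$, so Claim~1 survives---but not by your argument.

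\textbf{The saturation divisor $D'$.} Your treatment of the step ``$D'=D$'' is circular. You assert that $\Ll(D')\cong\Oo_S(L)$ and then deduce $D'\sim D$, but $\Ll(D')\cong\Oo_S(L)$ holds only once $D'=D$ is already known. When $D$ is reducible, $D'$ can be a proper nonzero subdivisor of $D$. Stability of $\Tt_S$ gives $\Ll(D')\cdot\Hh=D'\cdot\Hh<\tfrac{3}{2}$ only in the strict sense, so it rules out $D'$ with $D'\cdot\Hh\ge 2$ (the conic or two lines) but not $D'$ equal to a single line component, where $D'\cdot\Hh=1$. This is exactly case~(iii) in the paper's Claim~2, and it requires a genuinely new argument: one writes $\Ll(D')\cong\Oo_S(-L+E_1+\cdots+E_4)$, restricts the resulting extension of $\Tt_S$ to the eight lines $L_{ij}$ with $i\in\{1,2,3,4\}$ and $j\in\{5,6\}$, and obtains an impossible incidence configuration for the length-$5$ scheme $Z''$. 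Your dichotomy ``either $\Oo_S(L)\hookrightarrow\Tt_S$ or $\Ll$ is already saturated in $\Tt_S$'' omits this case entirely.
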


\begin{proof}
We already proved the assertion for smooth $D$ in part (b) of the proof of Proposition \ref{stab}. For singular $D$, we follow the same strategy used in the smooth case, i.e. we assume the existence of a destabilizing line bundle $\Ll=\Oo_S(aL+\sum_{i=1}^6 b_iE_i)$. Then the same argument works in verbatim, until one can reach 
\[
\Ll{\cdot}\Hh=3a+\sum_{i=1}^6 b_i=0.
\]
\vspace{.3cm}
\noindent {\it{Claim 1} :}  \quad For each $1\le i < j \le 6$, we have $a+b_i+b_j \leq 1$. 

\noindent {\it Proof of Claim 1} : \quad 
Consider the line $L_{ij}\in|\Oo_{S}(L-E_i-E_j)|$ for each $1\leq i<j\leq 6$. First assume that $D$ is irreducible, which is the case of (a) and (b) in the above. We have $|(L_{ij}\cap D)_{\text{red}}|=1$ and so $\left(\Tt_{S}(-\log D)\right)_{|L_{ij}}\cong\Oo_{L_{ij}}(-1)\dirsum\Oo_{L_{ij}}(1)$ by \cite[Lemma 2.2]{HMPV}. On the other hand, we have $\Ll_{|L_{ij}}\cong\Oo_{L_{ij}}(a+b_i+b_j)$, we get the assertion. Now assume that $D$ is not irreducible, which is the case of (c1)$\sim$(d2). By choosing a different blow-down to $\PP^2$ and an order of indices of exceptional divisor, we may assume the following: \vspace{.2cm}
\begin{itemize}[itemindent=80pt, topsep=0pt, label=-, itemsep=0pt]
\item [$\bullet$ Cases (c1 \& c2)]: $C'\in|\Oo_{S}(2L-E_1-E_2-E_3-E_4)|,\quad L'\in|\Oo_{S}(L-E_5-E_6)|$\\
\item [$\bullet$ Cases (d1 \& d2)]: $L_1\in |\Oo_S(L-E_1-E_4)|, \quad L_2\in |\Oo_S(L-E_2-E_5)|, \quad L_3\in |\Oo_S(L-E_3-E_6)|$,\vspace{.2cm}
\end{itemize}
where we have either $D=C'+L'$ or $D=L_1+L_2+L_3$. For each case, if $L_{ij}$ is not a component of $D$, then we have $|L_{ij}\cap D|=1$ and $\left(\Tt_{S}(-\log D)\right)_{|L_{ij}}\cong\Oo_{L_{ij}}(-1)\dirsum\Oo_{L_{ij}}(1)$ by \cite[Lemma 2.2]{HMPV}. If $L_{ij}$ is a component of $D$, we obtain the following exact sequence by tensoring the Poincar\'e residue sequence with $\Oo_{L_{ij}}$, 
%\begin{equation*}
%    \begin{tikzcd}[cramped,
%  row sep=small, column sep=small,
%  ar symbol/.style = {draw=none,"\textstyle#1" description,sloped},
%  isomorphic/.style = {ar symbol={\cong}},
%  ]
% 0 \ar[r] & \mathcal{T}\!\mathrm{or}^1_{S}(\Oo_{L_{ij}}(-1),\Oo_{L_{ij}})\ar[d,isomorphic]\ar[r] &\left(\Tt_{S}(-\log D)\right)_{|L_{ij}}\ar[dr,hookrightarrow]\ar[rr]&& \left(\Tt_{S}\right)_{|L_{ij}} \ar[r]\ar[d,isomorphic] &\Oo_{L_{ij}}(-1)\dirsum\Oo_{p}\dirsum\Oo_{q} \ar[r]& 0. \\
%          &\Oo_{L_{ij}}& & \Oo_{L_{ij}}\ar[ur,twoheadrightarrow] & \Oo_{L_{ij}}(-1)\dirsum\Oo_{L_{ij}}(2) & &
%\end{tikzcd}  
%\end{equation*}
\[
0\to \mathcal{T}\!\mathrm{or}^1_{S}\left( \Oo_{L_{ij}}(-1), \Oo_{L_{ij}} \right) \cong \Oo_{L_{ij}}\to\left(\Tt_{S}(-\log D)\right)_{|L_{ij}}\to\left(\Tt_{S}\right)_{|L_{ij}}\stackrel{\phi}{\longrightarrow}\Oo_{L_{ij}}(-1)\dirsum\Oo_V\to 0,
\]
where $V$ is a $0$-dimensional subscheme of $S$, given as the intersection $L_{ij}\cap (D-L_{ij})$. Note that $|V|=2$ for any case of (c1)$\sim$(d2).  
Then, since $\left(\Tt_{S}\right)_{|L_{ij}}\cong\Oo_{L_{ij}}(-1)\dirsum\Oo_{L_{ij}}(2)$ we have $\ker(\phi)\cong\Oo_{L_{ij}}$ and thus $\left(\Tt_{L_{ij}}(-\log D)\right)_{|L_{ij}}\cong\Oo_{L_{ij}}\dirsum\Oo_{L_{ij}}$. In summary, 
\[
\left(\Tt_{S}(-\log D)\right)_{|L_{ij}}\cong
\left\{
\begin{array}{cc}
    \Oo_{L_{ij}}(-1)\dirsum\Oo_{L_{ij}}(1) &\text{if\;\;$L_{ij}\notin D$}\\
    \Oo_{L_{ij}}\dirsum\Oo_{L_{ij}} &\text{if\;\; $L_{ij}\in D$},
\end{array}
\right.
\]
and we get the assertion.
\qed

\vspace{.3cm}
\noindent As in part (b) of the proof of Proposition \ref{stab}, we can use {\it Claim 1} to conclude that $a+b_i+b_j=0$ for each $1\leq i<j\leq 6$. Note that we again obtain 
\[
\left(\Tt_{S}(-\log D)\right)_{|\widehat{L}_{i}}\cong\Oo_{\widehat{L}_{i}}(-1)\dirsum\Oo_{\widehat{L}_{i}}(1)
\]
for any $D$ by \cite[Lemma 2.2]{HMPV}, and so one can obtain $\Ll \cong \Oo_S(-2L+\sum_{i=1}^6 E_i)$ using the exact same argument in part (b) of the proof of Proposition \ref{stab}. Setting $\Ll\otimes \Oo_S(D')$ the saturation of $\Ll$ in $\Tt_S$, one can obtain the diagram (\ref{stabdiag}) with $\Oo_D(D)$ replaced by $\Jj_D(D)$. On the other hand, we consider a commutative diagram
\begin{equation}\label{stabdiag2}
\begin{tikzcd}[
  row sep=normal, column sep=normal,
  ar symbol/.style = {draw=none,"\textstyle#1" description,sloped},
  isomorphic/.style = {ar symbol={\cong}},
  ]
  &        & 0 \ar[d] & 0\ar[d]\\
  0 \ar[r] & \Ll\tensor\Oo_{D'}(D') \ar[d,isomorphic]\ar[r] & \Jj_{D}(D) \ar[d]\ar[r] & \Kk \ar[d] \ar[r] & 0  \\
  0 \ar[r] & \Ll\tensor\Oo_{D'}(D') \ar[r]             & \Oo_{D}(D)   \ar[d]\ar[r] & \Kk' \ar[d] \ar[r] & 0\\
           &                                    & \Oo_{D^s} \ar[d]\ar[r,isomorphic] & \Oo_{D^s} \ar[d] & \\
    &  & 0 & 0.
\end{tikzcd}
\end{equation}
Here, $\Jj_{D}:=\mathrm{J}(D){\cdot}\Oo_{D}$, where $\mathrm{J}(D)$ is the \textit{Jacobian ideal sheaf} of $D$ in $\Oo_{S}$. If we set $D^s$ for the closed subscheme of $D$ defined by $\Jj_{D}$ so that we have an exact sequence
\[
0\to\Jj_{D}\to\Oo_{D}\to\Oo_{D^s}\to 0,
\]
then the sheaf $\Oo_{D^s}$ is supported on the singular locus of $D$; see \cite[Page 36]{dolgachev2007logarithmic}.

\vspace{.3cm}
\noindent {\it{Claim 2} :}  \quad We have $D'=0$, i.e. $\Ll$ is saturated in $\Tt_S$. 

\noindent {\it Proof of Claim 2} : \quad Suppose that $D$ is irreducible. If $D'$ is nontrivial, then from the second horizontal sequence in \eqref{stabdiag2} it would be $D'=D$. Notice that $\Ll\tensor\Oo_{D'}(D')\cong\Oo_{D}(L)$, while $D.(D-L)=D.D-D.L=0$, which implies $\Kk'=0$, absurd. Now assume that $D$ is not irreducible. If $D'$ is nontrivial, then we have the following three possibilities: 
\[
\begin{array}{ccc}
 \text{(i)}\; D'=D &\quad \text{(ii)}\; D'=C' \;\;\text{or}\;\;L_1+L_2   &\quad \text{(iii)}\; D'=L' \;\;\text{or}\;\;L_1 
\end{array}
\]
up to reordering of indices. For the case (i), we have $\Ll\tensor\Oo_{S}(D')\cong\Oo_{S}(L)$ and this is impossible as treated in the above. For (ii), assuming $D'=C'$ we would have $\Ll\tensor\Oo_{S}(D')\cong\Oo_{S}(-E_5-E_6)$ and then $\mu(\Oo_{S}(-E_5-E_6))=2>\frac{3}{2}=\mu(\Tt_{S})$, contradicting the stability of $\Tt_S$. Now for (iii) we can set $D'=L'$; the case $D'=L_1$ can be dealt verbatim. It implies that $\Ll\tensor\Oo_{S}(D')\cong\Oo_{S}(-L+E_1+E_2+E_3+E_4)$ and that there exists an extension
\begin{equation}
    0\to\Oo_{S}(-L+\sum_{i=1}^4 E_i)\to\Tt_{S}\to\Ii_{Z'',S}\tensor\Oo_{S}(4L-\sum_{i=1}^4 2E_i-E_5-E_6)\to 0
\end{equation}
with $|Z''|=5$. When we restrict the sequence to the line $L_{ij}$ with $i\in\{1,2,3,4\}$ and $j\in\{5,6\}$, we obtain
\begin{equation}\label{extt}
0\to\Oo_{L_{ij}}(e)\to\left(\Tt_{S}\right)_{|L_{ij}}\cong\Oo_{L_{ij}}(-1)\dirsum\Oo_{L_{ij}}(2)\to\Oo_{L_{ij}}(1-e)\to 0
\end{equation}
for $e=|Z''\cap L_{ij}|$. As an automatic consequence we can obtain $e=2$, \ie each line $L_{ij}$ with $i\in\{1,2,3,4\}$ and $j\in\{5,6\}$ passes through exactly two points of $Z''$. But this is impossible by the following argument. Let $Z''=\{q_1,\ldots, q_5\}$ and suppose that there exists such an arrangement of $L_{ij}$'s. First, we may assume that $q_1$ and $q_2$ is contained in $L_{15}$. Second, $|L_{16}\cap L_{15}|=0$ implies that $L_{16}$ cannot contains $q_1$ or $q_2$. Suppose $q_3, q_4\in L_{16}$. Third, for the line $L_{25}$, we have $|L_{25}\cap L_{15}|=0$ and $|L_{25}\cap L_{16}|=1$. It implies that $L_{25}$ could contain at most one of $q_3, q_4$ with $q_5$. Next, for $L_{26}$ we must have that $|L_{26}\cap L_{15}|=0$, $|L_{26}\cap L_{16}|=0$, and $|L_{26}\cap L_{25}|=0$. In this circumstances, $L_{26}$ could contain at most one of $q_1, q_2$ but not any of $q_3,q_4$, or $q_5$. Therefore, the arrangement that we supposed could not exist. \qed

\vspace{.3cm}
\noindent Now by Claim 2, one can obtain an exact sequence
\[
0\to \Ll \to \Tt_S \to \Ii_{Z'', S}\otimes \Gg \to 0,
\]
where $Z''$ is a $0$-dimensional subscheme of $S$ with $|Z''|=7$. Notice that we have reached the same extension $\eqref{ext2}$ in part (b) of the proof of Proposition \ref{stab}. We can proceed by the same argument to obtain a contradiction. Consequently, $\Tt_{S}(-\log D)$ is $\mu$-stable with respect to $\Oo_{S}(1)$, for any hyperplane $H\in(\PP^3)^{*}$ and $D=S\cap H$.   \end{proof}

\begin{proposition}\label{stabstab}
For each hyperplane $H\in (\PP^3)^*$, the net logarithmic tangent sheaf $\Tt_S(H; \PP^3)$ is stable. 
\end{proposition}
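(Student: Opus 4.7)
The plan is to deduce the stability of $\Tt_S(H;\PP^3)$ directly from the $\mu$-stability of $\Tt_S(-\log D)$ just established in Proposition \ref{stabslope}. The key tool will be the Poincar\'e-residue type exact sequence
\[
0\to \Tt_S(H;\PP^3)\to \Tt_S(-\log D)\to \Oo_R\to 0
\]
derived immediately before Proposition \ref{stabslope}, where $R=R_0\cap S\subset S$ is the zero-dimensional subscheme supported on the singularities of $D=S\cap H$. In particular, both sheaves have rank $2$, identical first Chern class, and common slope $\mu=0$ with respect to $\Hh$.

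I would then argue as follows. Let $\Ff\subset \Tt_S(H;\PP^3)$ be a torsion-free subsheaf of rank $1$. Composing the inclusion $\Ff\hookrightarrow \Tt_S(H;\PP^3)$ with the injection in the residue sequence above yields an injection $\Ff\hookrightarrow \Tt_S(-\log D)$. By Proposition \ref{stabslope}, the logarithmic tangent bundle $\Tt_S(-\log D)$ is $\mu$-stable for every $H\in (\PP^3)^{*}$, hence
\[
\mu(\Ff)<\mu(\Tt_S(-\log D))=0=\mu(\Tt_S(H;\PP^3)),
\]
which shows that $\Tt_S(H;\PP^3)$ itself is $\mu$-stable. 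Since $\mu$-stability implies Gieseker stability for torsion-free sheaves on a smooth projective surface, this would complete the proof; note that, as the remark after \eqref{eeqq09} already points out, for $c_1=0$ stability and semistability coincide, so the strict inequality above is exactly what is needed.

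All of the substantive work has been front-loaded into Proposition \ref{stabslope}, so no serious obstacle is expected to remain here. The only points worth checking are that $\Tt_S(H;\PP^3)$ is torsion-free (which holds by construction, as the definition quotients out the torsion) and that the cokernel $\Oo_R$ is genuinely a zero-dimensional torsion sheaf rather than having positive-dimensional support, so that ranks, first Chern classes, and hence slopes are preserved under the inclusion. Both facts are immediate from the description of $R$ preceding Proposition \ref{stabslope}, and the Milnor-number table there shows this holds uniformly across the six types of singular $D$. Thus the net logarithmic tangent sheaf inherits stability essentially for free from the stability of the usual logarithmic tangent sheaf on the cubic surface.
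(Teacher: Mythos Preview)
Your argument is correct and rests on the same key input as the paper's, namely Proposition \ref{stabslope}. The execution differs slightly: you push a destabilizing rank-one subsheaf $\Ff$ forward along the inclusion $\Tt_S(H;\PP^3)\hookrightarrow \Tt_S(-\log D)$ coming from the residue sequence, whereas the paper instead dualizes the destabilizing sequence and uses $\Tt_S(H;\PP^3)^\vee\cong\Tt_S(-\log D)^\vee\cong\Tt_S(-\log D)$ (the last isomorphism since $c_1=0$) to realize the saturation $\Ll$ of $\Ff$ inside $\Tt_S(-\log D)$. Both routes yield $\mu(\Ff)<0$; you then invoke the standard implication $\mu$-stable $\Rightarrow$ Gieseker-stable, while the paper writes out the reduced Hilbert polynomial and compares coefficients. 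Your version is a bit more streamlined; the paper's makes the inequality $\Ll\cdot\Hh\le -1$ and the Hilbert polynomials explicit.
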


\begin{proof}
Assume that $\Tt_S(H; \PP^3)$ is not stable with a destabilizing subsheaf $\Ff$. By saturating $\Ff$, we can set $\Ff\cong \Ii_{Z_1, S}\otimes \Ll$ for some line bundle $\Ll$ and a $0$-dimensional subscheme $Z_1$. Then it admits an exact sequence
\[
0\to \Ii_{Z_1, S} \otimes \Ll \to \Tt_S(H; \PP^3) \to \Ii_{Z_2, S}\otimes \Ll^{-1} \to 0,
\]
for another $0$-dimensional subscheme $Z_2$ with $|Z_1|+|Z_2|-\Ll{\cdot}\Ll = 9$. Dualizing this sequence, one can get an injection
\[
0\to \Ll \cong \mathcal{H}om_S(\Ii_{Z_2, S}\otimes \Ll^{-1}, \Oo_S) \to \Tt_S(H; \PP^3)^\vee \cong \Tt_S(-\log D)^\vee \cong \Tt_S(-\log D)
\]
with $D=S \cap H$. By Proposition \ref{stabslope}, one gets $\Ll {\cdot}\Hh\leq -1$ and so 
\[
\mathrm{P}_{\Ff}(t)=\frac{3}{2}t^2+\left( \frac{3}{2}+\Ll{\cdot}\Hh \right) t+\left ( 1+\frac{1}{2}\left(\Ll{\cdot}\Hh+\Ll {\cdot}\Ll \right) -|Z_1|\right),
\]
which is strictly less than $\mathrm{P}_{\Ee}(t)=\frac{3}{2}t(t+1)-\frac{7}{2}$ with $\Ee=\Tt_S(H; \PP^3)$ for $t\gg 0$, a contradiction. 
\end{proof}

\begin{corollary}\label{cinj2}
The map $\Phi :  \PP\mathrm{H}^0(\Oo_S(-K_S))\cong\PP^3\dashrightarrow\mathbf{M}_S^{\Hh}(0,9)$ in Corollary \ref{cinj1}, is an injective morphism. 
\end{corollary}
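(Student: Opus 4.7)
The statement has two components: that $\Phi$ extends to an actual morphism defined on all of $(\PP^3)^*$, and that this morphism is injective on closed points.

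For the morphism part, the set-theoretic assignment $H\mapsto[\Tt_S(H;\PP^3)]$ is already everywhere defined by Proposition \ref{stabstab}, since every fiber is stable. To upgrade this to a morphism of schemes, I would construct a universal family on $S\times(\PP^3)^*$ following the template of the proof of Corollary \ref{corr}. Concretely, on $\PP:=\PP^3\times(\PP^3)^*$ with $\widetilde{S}:=S\times(\PP^3)^*\in|\Oo_\PP(3,0)|$ and the universal hyperplane $\widetilde{H}\in|\Oo_\PP(1,1)|$, I would form the relative gradient map $\nabla_1:\Oo_\PP(1,0)^{\oplus 4}\to\Oo_\PP(3,0)\oplus\Oo_\PP(1,1)$, set $\Aa:=\mathrm{Im}(\nabla_1)$, and define the universal net logarithmic tangent sheaf $\Pp$ on $\widetilde{S}$ as $(\ker\nabla_1)\otimes\Oo_{\widetilde{S}}$ modulo $\mathcal{T}\!\mathrm{or}^1_\PP(\Aa,\Oo_{\widetilde{S}})$. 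The identification $\Pp_{|S\times\{H\}}\cong\Tt_S(H;\PP^3)$ reduces, as in Corollary \ref{corr}, to a $\mathcal{T}\!\mathrm{or}^1$ vanishing that follows from dimension control of the degeneracy locus of $\nabla_1$ in $\PP$. The classifying morphism of this flat family of stable sheaves then gives $\Phi$.

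For injectivity, suppose $\Tt_S(H_1;\PP^3)\cong\Tt_S(H_2;\PP^3)$ and set $D_i:=S\cap H_i$. Since Proposition \ref{mainprop} identifies $\Tt_S(H;\PP^3)^{\vee\vee}$ with $\Tt_S(-\log D)$ and its canonical torsion cokernel with $\Oo_R$ (for $R$ as in \eqref{eeqq09}), the abstract isomorphism induces both $\Tt_S(-\log D_1)\cong\Tt_S(-\log D_2)$ and $\Oo_{R_1}\cong\Oo_{R_2}$ as $\Oo_S$-modules, whence $R_1=R_2$ as closed subschemes of $S$. Since $R_i=\emptyset$ precisely when $D_i$ is smooth, either both $D_i$ are smooth or both are singular. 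In the smooth case, Proposition \ref{tor} applied to $\Tt_S(-\log D_1)\cong\Tt_S(-\log D_2)$ gives $D_1=D_2$, hence $H_1=H_2$. In the singular case, pick any closed point $q\in R_1=R_2$, which is necessarily a singular point of both $D_i$; a short tangent-space calculation shows that $S\cap H$ is singular at $q$ if and only if $H=T_qS$, forcing $H_1=T_qS=H_2$. This is the cubic analogue of the observation in Remark \ref{tor123}.

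The main obstacle will be the morphism step — specifically verifying the required $\mathcal{T}\!\mathrm{or}^1$ vanishing uniformly across \emph{all} $H\in(\PP^3)^*$, including those where $D=S\cap H$ is singular, by analyzing the universal degeneracy locus in $\PP$ and its intersection with each fiber $S\times\{H\}$. Once the flat family is secured, the injectivity portion reduces to the short geometric observations above.
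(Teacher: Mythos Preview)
Your proposal is correct and follows essentially the same route as the paper: construct the universal sheaf on $S\times(\PP^3)^*$ by the template of Corollary~\ref{corr} (with the same $\mathcal{T}\!\mathrm{or}^1$ vanishing caveat), then split injectivity into the smooth case via Proposition~\ref{tor} and the singular case via the observation $H=T_qS$ for any $q\in\mathrm{Sing}(D)$. The only cosmetic difference is that you recover the singular locus through the double-dual sequence $0\to\Tt_S(H;\PP^3)\to\Tt_S(-\log D)\to\Oo_R\to 0$, whereas the paper simply invokes $\mathrm{Sing}(\Tt_S(H;\PP^3))=\mathrm{Sing}(S\cap H)$; these are the same statement.
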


\begin{proof}
Using the argument in the proof of Corollary \ref{corr}, one can construct the universal sheaf $\Pp$ on $S\times (\PP^3)^*$ such that 
\[
\Pp_{~|S\times \{H\}} \cong \Tt_S(H;\PP^3)
\]
for each $H\in (\PP^3)^*$, which induces a morphism $\Phi : (\PP^3)^* \rightarrow \mathbf{M}_S^{\Hh}(0,9)$ by Proposition \ref{stabstab}. Thus it remains to show the injectivity of $\Phi$. Now, from Corollary \ref{cinj1} and the fact that $\mathrm{Sing}(\Tt_S(H; \PP^3))=\mathrm{Sing}(S \cap H)$, one can deal with the case when $S \cap H$ is singular. Set $\Ee:=\Tt_S(H; \PP^3)$ for some $H$ with $D=S \cap H$ is singular. In particular, we have $\mathrm{Sing}(\Ee)\ne \emptyset$ and so pick a point $p\in \mathrm{Sing}(\Ee)$. Then it is enough to check that $H=T_pS$ the tangent plane of $S$ at $p$, which would define the inverse morphism $\Phi^{-1} : \mathrm{Im}(\Phi) \rightarrow \PP^3$. Choose the coordinates $[x_0:x_1:x_2:x_3]$ of $\PP^3$ with $H=\VV(x_3)$. Then we can set $S=\VV (F)$ with
\[
F(x_0, x_1, x_2, x_3)=A(x_0,x_1,x_2)+x_3\cdot B(x_0,x_1,x_2,x_3).
\]
Recall that $\mathrm{Sing}(D)$ is the degeneracy locus of the Jacobian matrix 
\[
\begin{pmatrix}
    \frac{\partial A}{\partial x_0}+x_3{\cdot}\frac{\partial B}{\partial x_0}&\frac{\partial A}{\partial x_1}+x_3{\cdot}\frac{\partial B}{\partial x_1}&\frac{\partial A}{\partial x_2}+x_3{\cdot}\frac{\partial B}{\partial x_2}&B+x_3{\cdot}\frac{\partial B}{\partial x_3}\\
    0&0&0&1
\end{pmatrix}
\]
in $D$, given by the $(2\times 2)$-minors. In particular, the point $p=[p_0:p_1:p_2:0]\in D$ satisfies the equations
\[
\left(\frac{\partial A}{\partial x_i}+x_3{\cdot}\frac{\partial B}{\partial x_i}\right)(p)=0 \quad\text{for}\quad i=0,1,2.   
\]
Note also that we have $B(p)\neq 0$, since $S$ is smooth. Finally, the tangent plane $T_{p}S$ of $S$ at $p$ is given by 
\[
\sum_{i=0}^2\left(\left(\frac{\partial A}{\partial x_i}+x_3{\cdot}\frac{\partial B}{\partial x_i}\right)(p){\cdot}(x_i-p_i)\right)+\left(B+x_3{\cdot}\frac{\partial B}{\partial x_3}\right)(p)\cdot x_3=0,
\]
which is $x_3=0$.
\end{proof}

\begin{remark}
Let $[\Ee]\in \mathbf{M}_S^{\Hh}(0,9)$ be a stable vector bundle. Note that 
\[
\mathrm{ext}_S^1(\Ee, \Ee)_0-\mathrm{ext}_S^2(\Ee, \Ee)_0=4{\cdot} c_2(\Ee)-3\chi(\Oo_S)=33,
\]
and so the expected dimension of $\mathbf{M}_S^{\Hh}(0,9)=33$. From $\chi(\Ee(2))=11$ and $\mathrm{h}^2(\Ee(2))=\mathrm{h}^0(\Ee(-3))$ by the Serre duality, we get $\mathrm{h}^0(\Ee(2))\ge 11$. A general section $\sigma \in \mathrm{H}^0(\Ee(2))$ induces an exact sequence
\begin{equation}\label{ext944}
0\to \Oo_S \stackrel{\sigma}{\longrightarrow} \Ee(2) \to \Ii_{Z', S}(4) \to 0,
\end{equation}
for a $0$-dimensional subscheme $Z'$ with $|Z'|=21$. Conversely, for a general choice of $0$-dimensional subscheme $Z'$ with $|Z'|=21$, the extension family (\ref{ext944}) is parametrized by $\mathrm{Ext}_S^1(\Ii_{Z', S}(4), \Oo_S) \cong \mathrm{H}^1(\Ii_{Z', S}(3))^\vee\cong \CC^{2}$. Since $\mathrm{h}^0(\Ee(2))=\mathrm{h}^0(\Ii_{Z', S}(4))+\mathrm{h}^0(\Oo_S)=11$, we get that the vector bundles fitting into the extension (\ref{ext944}) form a family whose dimension is $2\times 21+2-11=33$. In particular, $\mathbf{M}_S^{\Hh}(0,9)$ is generically smooth with dimension $33$, and we have
\[
\mathrm{h}^0(\Ee(1))=0, \quad \mathrm{h}^1(\Ee(1))=1, \quad \mathrm{h}^0(\Ee(2))=11, \quad  \mathrm{h}^1(\Ee(2))=0
\]
for a general vector bundle $[\Ee]\in \mathbf{M}_S^{\Hh}(0,9)$. 
\end{remark}

\begin{proposition}\label{char}
The logarithmic vector bundles $\Tt_S(-\log S \cap H)$ in $\mathbf{M}_S^{\Hh}(0,9)$ are characterized as the vector bundles $[\Ff]\in \mathbf{M}_S^{\Hh}(0,9)$ satisfying the condition that $\Ff(1)$ is globally generated with $\mathrm{h}^0(\Ff(1))=3$. 
\end{proposition}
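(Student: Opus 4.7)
The plan is to establish the two implications of this characterization separately.

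For the direction ($\Rightarrow$), let $\Ff = \Tt_S(-\log D)$ with $D = S \cap H$ a smooth hyperplane section, so $R = R_0 \cap S = \emptyset$ and the sequence $(\ref{eeqq09})$ reduces to $0 \to \Tt_S(-\log D) \to \Oo_S(1)^{\oplus 3} \to \Oo_S(3) \to 0$ (together with the identification $\Tt_S(H;\PP^3) \cong \Tt_S(-\log D)$ in the smooth case). The bundle $\Tt_S(-\log D)$ has rank two with trivial determinant and is therefore self-dual; dualising and twisting by $\Oo_S(1)$ yields
\[
0 \to \Oo_S(-2) \to \Oo_S^{\oplus 3} \to \Tt_S(-\log D)(1) \to 0,
\]
exhibiting $\Tt_S(-\log D)(1)$ as globally generated. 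The vanishings $h^0(\Oo_S(-2)) = h^1(\Oo_S(-2)) = 0$ (the latter by Serre duality combined with Kodaira vanishing for the ample $\Oo_S(1)$, using $\omega_S = \Oo_S(-1)$) then give $h^0(\Tt_S(-\log D)(1)) = 3$.

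For the direction ($\Leftarrow$), let $[\Ff]$ be a vector bundle in $\mathbf{M}_S^{\Hh}(0,9)$ with $\Ff(1)$ globally generated and $h^0(\Ff(1)) = 3$. Global generation by exactly three sections provides
\[
0 \to \Kk \to \Oo_S^{\oplus 3} \to \Ff(1) \to 0
\]
with $\Kk$ a line bundle, and the Whitney formula forces $c_1(\Kk) = -c_1(\Ff(1)) = -2H$ in $\Pic(S)$, so $\Kk \cong \Oo_S(-2)$. Dualising and twisting by $\Oo_S(1)$, invoking $\Ff \cong \Ff^{\vee}$ (rank two with $c_1 = 0$), produces
\[
0 \to \Ff \to \Oo_S(1)^{\oplus 3} \to \Oo_S(3) \to 0,
\]
where the surjection is encoded by three quadrics $g_0, g_1, g_2 \in H^0(\Oo_S(2))$. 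Each $g_i$ lifts uniquely to a quadric $\tilde g_i \in H^0(\Oo_{\PP^3}(2))$, since the restriction map is an isomorphism in degree $2 < \deg F$.

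The crucial step is to show that $W := \mathrm{span}(\tilde g_0, \tilde g_1, \tilde g_2)$ is three-dimensional and contained in the four-dimensional Jacobian space $J_F = \mathrm{span}(\partial F/\partial x_0, \ldots, \partial F/\partial x_3)$. Granted $W \subset J_F$, adjoining a complementary $\tilde g_3 \in J_F$ to form a basis and applying the linear change of coordinates on $\PP^3$ so that $\partial F / \partial y_i = \tilde g_i$ identifies the defining sequence of $\Ff$ with that of $\Tt_S(-\log S \cap V(y_3))$; smoothness of this section is automatic from $\Ff$ being locally free. The main obstacle is the inclusion $W \subset J_F$. The plan for this is to compare with the tangent--normal sequence
\[
0 \to \Tt \to V \otimes \Oo_S(1) \xrightarrow{(\partial F / \partial x_i)} \Oo_S(3) \to 0
\]
on $S$, with $V = H^0(\Oo_{\PP^3}(1))$ and $\Tt$ an extension of $T_S$ by $\Oo_S$, and to argue that the surjection $\Oo_S(1)^{\oplus 3} \to \Oo_S(3)$ lifts through the universal Euler surjection $V \otimes \Oo_S(1) \to \Oo_S(3)$; such a lift is exactly the condition $W \subset J_F$. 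The obstruction lies in $\Ext^1(\Oo_S(1)^{\oplus 3}, \Tt) \cong H^1(\Tt(-1))^{\oplus 3}$, which can be controlled via $0 \to \Oo_S \to \Tt \to T_S \to 0$ in combination with the sharp equality $h^0(\Ff(1)) = 3$. Alternatively, one may invoke the injective morphism $\Phi$ of Corollary \ref{cinj2} and argue by dimension that the three-dimensional image $\Phi((\PP^3)^*)$ already exhausts the locus cut out by the globally-generated condition with $h^0 = 3$.
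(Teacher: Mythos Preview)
Your forward direction matches the paper. For the converse you correctly isolate the crux --- that the lifted quadrics $\tilde g_i$ must lie in the Jacobian space $J_F$ of the \emph{fixed} cubic $F$ cutting out $S$ --- but neither plan you sketch actually establishes this. Plan~1 is circular: from $0 \to \Tt(-1) \to \Oo_S^{\oplus 4} \xrightarrow{\nabla F} \Oo_S(2) \to 0$ one computes $H^1(\Tt(-1)) \cong H^0(\Oo_S(2))/J_F \cong \CC^6 \neq 0$, and the obstruction class of the triple $(g_0,g_1,g_2)$ is precisely its image in $(H^0(\Oo_S(2))/J_F)^{\oplus 3}$, so vanishing of the obstruction \emph{is} the inclusion $W \subset J_F$ rather than a route to it. The hypothesis $h^0(\Ff(1)) = 3$ is automatic from $h^1(\Oo_S(-2)) = 0$ and gives no additional leverage. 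Plan~2 fails on dimension: injections $\Oo_S(-2) \hookrightarrow \Oo_S^{\oplus 3}$ with locally free cokernel form, modulo $GL_3$, a $(3\cdot 10 - 9)$-dimensional family, all of whose members have $(c_1,c_2)=(0,9)$ and $h^0(\Ff(1))=3$; this is far larger than the $3$-dimensional image of $\Phi$, so a bare dimension comparison cannot force the two loci to coincide.

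The paper's approach to the converse is entirely different and does not attempt the inclusion $W \subset J_F$ for the given $F$. It instead runs a dimension count over the $20$-dimensional space $H^0(\Oo_{\PP^3}(3))$ of \emph{all} cubics: setting $V_i := \{F' : Q_i \in J_{F'}\}$, each $V_i$ has dimension $14$ (the condition that a fixed quadric lie in the $4$-dimensional span of partials is codimension $6$), whence $V_0 \cap V_1 \cap V_2 \neq 0$, and one obtains \emph{some} cubic $F'$ with $Q_i = \partial F'/\partial x_i$ after a coordinate change. Your formulation, insisting on the fixed $F$, is the sharper requirement; the paper's argument as written produces a cubic $F'$ but does not explain why $F'$ may be taken to be the equation of $S$.
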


\begin{proof}
We have an exact sequence
\begin{equation}\label{ext99934}
0\to \Oo_S(-3) \stackrel{\nabla}{\longrightarrow} \Oo_S(-1)^{\oplus 3} \to \Ff \to 0
\end{equation}
for $\Ff:=\Tt_S(H; \PP^3)\cong \Tt_S(-\log D)$ with $D=S \cap H$ smooth, from which we get that $\Ff(1)$ is globally generated with $\mathrm{h}^0(\Ff(1))=3$. Conversely, let $[\Ff]\in \mathbf{M}_S^{\Hh}(0,9)$ be a stable vector bundle, where $\Ff(1)$ is globally generated and $\mathrm{h}^0(\Ff(1))=3$ so that it fits into the extension (\ref{ext99934}). Noticing $\mathrm{H}^0(\Oo_S(2))\cong \mathrm{H}^0(\Oo_{\PP^3}(2))\cong \CC^{\oplus 10}$, the map $\nabla$ is defined by $(Q_0, Q_1, Q_2)$ a triple of quadratic forms in $\CC[x_0, x_1, x_2, x_3]$. Consider a map 
\[
\partial ~:~ \mathrm{H}^0(\Oo_{\PP^3}(3)) \times \CC \langle\partial_0, \partial_1, \partial_2, \partial_3\rangle  \longrightarrow \mathrm{H}^0(\Oo_{\PP^3}(2))
\]
defined by derivation, where $\partial_j:=\frac{\partial}{\partial x_j}$ and $\CC \langle\partial_0, \partial_1, \partial_2, \partial_3\rangle $ is the linear span of them. Since the map $\partial$ is surjective, $\mathrm{ker}(\partial)$ is $14$-dimensional. Moreover, if $p_1 : \mathrm{ker}(\partial) \rightarrow \mathrm{H}^0(\Oo_{\PP^3}(3))$ is the projection to the first factor, then $\mathrm{Im}(p_1)$ is also $14$-dimensional. In other words, when we set
\[
V_i:=\left\{ F \in \mathrm{H}^0(\Oo_{\PP^3}(3))~\bigg|~ \sum_{j=0}^3 c_j \frac{\partial F}{\partial x_j}=Q_i \text{ for some }(c_0, c_1, c_2, c_3)\in \CC^{\oplus 4}\right\}\cup \{0\}
\]
for each $i=0,1,2$, then we get $\dim V_i=14$ and so $V_0\cap V_1 \cap V_2 \not\ne (0)$. In particular, up to change of coordinates, we can write $Q_i=\frac{\partial F}{\partial x_i}$ for some $F\in \mathrm{H}^0(\Oo_{\PP^3}(3))$, concluding the proof.  
\end{proof}

\end{document}